\providecommand{\U}[1]{\protect\rule{.1in}{.1in}}
\newtheorem{theorem}{Theorem}
\theoremstyle{plain}
\newtheorem{definition}{Definition}
\newtheorem{example}{Example}
\newtheorem{proposition}{Proposition}
\numberwithin{equation}{section}
\begin{document}
\title[Screen Almost Semi-Invariant Lightlike Submanifolds of indefinite Kaehler Manifolds]{Screen Almost Semi-Invariant Lightlike Submanifolds of indefinite Kaehler Manifolds}
\author{Sema\ KAZAN}
\curraddr{Department of Mathematics, Faculty of Arts and Sciences, Inonu University,
Malatya, Turkey}
\email{sema.bulut@inonu.edu.tr}
\author{Cumali\ YILDIRIM}
\curraddr{Department of Mathematics, Faculty of Arts and Sciences, Inonu University,
Malatya, Turkey}
\email{cumali.yildirim@inonu.edu.tr}
\thanks{}
\date{2022}
\subjclass[2010]{53C15,53C40, 53C55}
\keywords{Lightlike Manifolds, Indefinite Kaehler Manifolds, Degenerate Metric,
Lightlike submanifolds.}

\begin{abstract}
In the present paper, we introduce screen almost semi-invariant (SASI)
lightlike submanifolds of indefinite Keahler manifolds. We obtain the
neccesary and sufficient condition for the induced connection to be a metric
connection on SASI-lightlike submanifolds and construct an example for this
manifold. Also we find some conditions for integrability of distributions and
investigate certain characterizations.

\end{abstract}
\maketitle

\bigskip

\section{\textbf{INTRODUCTION}}

\bigskip

\bigskip

One of the most important issues of differential geometry is the Riemannian
geometry of submanifolds \cite{Chen}. Obviously, semi-Riemannian submanifolds
have similar properties with Riemannian submanifolds, but the lightlike
submanifolds \cite{Bejancu} are different since (contrary to the nondegenerate
cases) their normal vector bundle intersects with the tangent bundle. So,
studying them becomes more difficult than studying non-degenerate
submanifolds. When we think of hypersurfaces as submanifolds, we say that
lightlike hypersurfaces of semi-Riemannian manifolds are important due to
their physical applications in mathematical physics. Moreover, in physics,
lightlike hypersurfaces are interesting in general relativity since they
produce models of different types of horizons. Lightlike hypersurfaces are
also studied in the theory of electromagnetism.

The geometry of lightlike submanifolds of a semi-Riemannian manifold has been
presented in \cite{Bejancu} (see also \cite{Bejancu2}) by Duggal and Bejancu.
In \cite{Bayrm}, Duggal and \c{S}ahin have introduced differential geometry of
lightlike submanifolds and they have studied geometry of classes of lightlike
submanifolds (see \cite{Bayrm2}, \cite{Bayrm3}, \cite{Bayrm4}, \cite{Bayrm5}).
Also, the geometry of lightlike submanifolds of indefinite Kaehler manifolds
has been presented in a book by Duggal and Bejancu \cite{Bayrm4}. The notion
of semi-invariant lightlike submanifolds has been studied some authors. For
instance, authors have introduced screen semi invariant lightlike submanifolds
of semi-Riemannian product manifolds in \cite{Erol} and they have given the
following definition:

Let $(\overline{M},\overline{g})$ be a semi-Riemannian product manifold and
$M$ be a lightlike submanifold of $\overline{M}$. We say that $M$ is screen
semi-invariant (SSI)-lightlike submanifold of $\overline{M}$ if the following
statements are satisfield

\ 1) There exists a non-null distribution $D\subseteq S(TM)$ such that%
\[
S(TM)=D\bot D^{\bot},\text{ }FD=D,\text{ }FD^{\bot}\subset S(TM^{\bot}),\text{
\ }D\cap D^{\bot}=\{0\},
\]
where $D^{\bot}$ is orthogonal complementary to $D$ in $S(TM)$.

2) $Rad(TM)$ is invariant with respect to $F$, that is $FRad(TM)=Rad(TM)$.

Then we have%
\begin{align*}
F\ell tr(TM)  &  =\ell tr(TM),\\
TM  &  =D^{^{\prime}}\bot D^{\bot}\text{, }D^{^{\prime}}=D\bot Rad(TM).
\end{align*}

Hence, it follows that $D^{^{\prime}}$ is also invariant with respect to $F$.
We denote the orthogonal complement to $FD^{\bot}$ in $S(TM^{\bot})$ by
$D_{0}$. Then, we have%
\[
tr(TM)=ltr(TM)\bot FD^{\bot}\bot D_{0},
\]
where $F^{2}=I$\ and\ $\overline{g}(FX,Y)=\overline{g}(X,FY),$ for any
$X,Y\in(T\overline{M}).$

Also, Bahad\i r has studied screen semi-invariant half-lightlike submanifolds
of a semi-Riemannian product manifold \cite{Oguz}. In \cite{Nergiz},
semi-invariant lightlike submanifolds of golden semi-Riemannian manifolds have
been introduced by Poyraz and Do\u{g}an. And, in \cite{Atckn}, authors have
introduced semi-invariant lightlike submanifolds of a semi-Riemannian product
manifold. Finally, Gupta and friends have given geometry of semi-invariant
lightlike product manifolds in \cite{Garima}.

On the other hand, in 1984, Bejancu and Papaghiuc \cite{Bejancu3} have given
the definition of almost semi-invariant submanifolds of a Sasakian manifold as follows:

Let $\widetilde{M}$ be a $(2n+l)-$dimensional almost contact metric manifold
with $(F,\xi,\eta,g)$ as the almost contact metric structure, where $F$ is
tensor field of type $(1,1)$, $\xi$ is a vector field, $\eta$ is a $1$-from
and $g$ is a Riemannian metric on $\widetilde{M}$. These tensor fields satisfy%
\begin{equation}
F^{2}=-I+\eta\otimes\xi,\text{ \ \ }F(\xi)=0,\text{ \ \ \ \ }\eta
(\xi)=1,\text{ \ \ \ }\eta\circ F=0 \label{1}%
\end{equation}
and
\begin{equation}
g(FX,FY)=g(X,Y)-\eta(X)\eta(Y) \label{1'}%
\end{equation}
\bigskip for all vector fields $X,Y$ tangent to $\widetilde{M}$, where $I$
designes the identity morphism on the tangent bundle $T\widetilde{M}.$ It is
well known that $\widetilde{M}$ is a Sasakian manifold if and only\bigskip\
\begin{equation}
(\widetilde{\nabla}_{X}F)Y=g(X,Y)\xi-\eta(Y)X \label{1''}%
\end{equation}
for all $X,Y$ tangent to $\widetilde{M}$, where $\widetilde{\nabla}$ is the
Riemannian connection with respect to $g.$ From (\ref{1''}), we have
\begin{equation}
\widetilde{\nabla}_{X}\xi=-FX. \label{1'''}%
\end{equation}

Now, let $M$ be an $m$-dimensional manifold isometrically immersed in a
Sasakian manifold $\widetilde{M}$. Denote by $TM$ and $TM^{\bot}$ the tangent
bundle of $M$ and the normal bundle to $M,$ respectively. Suppose the
structure vector field $\xi$\ of $\widetilde{M}$ be tangent to the submanifold
$M$ and denote by $\{\xi\}$ the $1$-dimensional distribution spanned by $\xi$
on $M$ and by $\{\xi\}^{\bot}$ the complementary orthogonal distribution to
$\{\xi\}$ in $TM.$

For any vector bundle $H$ on $M,$ we denote by $\Gamma(H)$ the module of all
differentiable sections of $H$. For any $X$ $\in\Gamma(TM),$ we have
$g(FX,\xi)=0$. Then we put%
\begin{equation}
FX=bX+cX, \label{1''''}%
\end{equation}
where \ $bX\in\Gamma(\{\xi\}^{\bot})$ and $cX\in\Gamma(TM^{\bot}).$ Thus $b$
is an endomorphism of the tangent bundle $TM$ and $c$ is a normal bundle
valued $1$-form on $M$. Next, for each $x\in M$ we define the following
subspaces:%
\begin{equation}
D_{x}=\left\{  X_{x}\in\{\xi\}_{x}^{\bot}:c(X_{x})=0\right\}  \label{1'1}%
\end{equation}
and%
\begin{equation}
D_{x}^{\bot}=\left\{  X_{x}\in\{\xi\}_{x}^{\bot}:b(X_{x})=0\right\}  .
\label{1'11}%
\end{equation}

We note that $D_{x}$ and $D_{x}^{\bot}$ are two orthogonal subspaces of space
$T_{x}M$. In fact, using (\ref{1}), (\ref{1'}) and (\ref{1''''}), for any
$X_{x}\in$ $D_{x}$ and $Y_{x}\in D_{x}^{\bot},$ we have
\[
g(X_{x},Y_{x})=g(FX_{x},FY_{x})=g(bX_{x},cY_{x})=0.
\]

Thus, the submanifold $M$ of the Sasakian manifod $\widetilde{M}$ is said to
be an \textit{almost semi-invariant submanifold} if $dim(D_{x})$ and
$dim(D_{x}^{\bot})$ are constant along $M$ and $D:x\rightarrow D_{x}\subset
T_{x}M,$ $D^{\bot}:x\rightarrow D_{x}^{\bot}\subset T_{x}M$ define
differentiable distributions on $M$.

Now we denote by $\widetilde{D}$ the complementary orthogonal to $D\oplus
D^{\bot}\oplus\xi$ in $TM$. Thus, for the tangent bundle to the almost
semi-invariant submanifold $M$, we have the orthogonal decomposition:%
\begin{equation}
TM=D\oplus D^{\bot}\oplus\widetilde{D}\oplus\xi, \label{1'1'}%
\end{equation}
where

\textit{i)} $D$ is an invariant distribution on $M$, that is $FD=D,$

\textit{ii)} $D^{\bot}$ is anti-invariant distribution on $M$, that is
$FD^{\bot}\subset TM^{\bot},$

\textit{iii)} $\widetilde{D}$ is neither an invariant nor an anti-invariant
distribution on $M,$ that is $bX_{x}\neq0$ and $cX_{x}\neq0,$ for any $x\in M$
and $X_{x}\in\widetilde{D}_{x}.$

Also, in \cite{Papaghiuc}, Papaghiuc has introduced some results on almost
semi-invariant submanifolds in Sasakian manifolds. For similar studies, see
\cite{Bejan}, \cite{Cabras}, \cite{Hasan} and \cite{Tripathi}.

In this paper, after giving some basic notions about lightlike submanifolds in
the second Section, in Section 3 we give the definition of screen almost
semi-invariant (SASI) lightlike submanifolds of indefinite Keahler manifolds
and construct an example for this submanifold. After as a characteristic
theorem, we obtain the neccesary and sufficient condition for the induced
connection to be a metric connection on SASI-lightlike submanifolds. Also, we
give some certain characterizations for SASI-lightlike submanifolds. For
instance, we investigate the totally geodesic of the SASI-lightlike submanifolds.

\section{\textbf{LIGHTLIKE SUBMANIFOLDS}}

Let $(\overline{N}^{p+q},\overline{\rho})$ be a semi-Riemannian manifold and
let $\widetilde{N}^{p}$ be an immersed submanifold in $\overline{N}.$ If
$\widetilde{N}$ is a lightlike manifold with respect to the metric $\rho$
induced from $\overline{\rho}$ and the radical distribution $Rad(T\widetilde
{N})$ is of $ranks$, where $1\leq s\leq p,$ then $\widetilde{N}$ is called a
lightlike submanifold \cite{Bejancu}. According to this definition, we recall
basic notions about lightlike submanifolds as following. A semi-Riemannian
complementary distribution of $Rad(T\widetilde{N})$ in $T\widetilde{N}$ is
called screen distribution and it is denoted by $S(T\widetilde{N}),$ that is
$T\widetilde{N}=Rad(T\widetilde{N})\bot S(T\widetilde{N}).$ Now, we consider a
screen transversal vector bundle $S(T\widetilde{N}^{\bot}).$ This vector
bundle is a semi-Riemannian complementary vector bundle of $Rad(T\widetilde
{N})$ in%
\begin{equation}
T\widetilde{N}^{\bot}=\cup_{x\in\widetilde{N}}\{u\in T_{x}\overline{N}\text{
}|\text{\ }\overline{\rho}(u,v)=0,\forall v\in T_{x}\widetilde{N}\}.
\label{1y}%
\end{equation}

Let $tr(T\widetilde{N})$ and $ltr(T\widetilde{N})$ be complementary (but not
orthogonal) vector bundles to $T\widetilde{N}$ in $T\overline{N}%
|_{\widetilde{N}}$ and to $Rad(T\widetilde{N})$ in $S(T\widetilde{N}^{\bot
})^{\bot},$ respectively. Then we get
\begin{equation}
tr(T\widetilde{N})=ltr(T\widetilde{N})\bot S(T\widetilde{N}^{\bot}) \label{2}%
\end{equation}
and
\begin{align}
T\overline{N}|_{\widetilde{N}}  &  =T\widetilde{N}\oplus tr(T\widetilde
{N})\nonumber\\
&  =(Rad(T\widetilde{N})\oplus ltr(T\widetilde{N}))\bot S(T\widetilde{N})\bot
S(T\widetilde{N}^{\bot}). \label{3}%
\end{align}

Here, $ltr(T\widetilde{N})$ is called lightlike transversal vector bundle in
$T\overline{N}$ such that there exists a local null frame $\{N_{i}\}$ of
sections with values in the orthogonal complement of $S(T\widetilde{N}^{\bot
})$ in $[S(T\widetilde{N})]^{\bot}$ and $ltr(T\widetilde{N})$ locally spanned
by $\{N_{i}\}.$ Then, take into account any local basis $\{\xi_{i}\}$ of
$Rad(T\widetilde{N})$ and $\{N_{i}\}$ of $ltr(T\widetilde{N}),$ we have
$\overline{\rho}(\xi_{i},N_{j})=\delta_{ij}$ and $\overline{\rho}(N_{i}%
,N_{j})=0,$ $i,j=1,...,s.$

For a lightlike submanifold $(\widetilde{N},\rho,S(T\widetilde{N}%
),S(T\widetilde{N}^{\bot}))$,\ there are the following four cases:

\textit{(i)} $\widetilde{N}$ is called s-lightlike, if $s<\min\{p,q\},$

\textit{(ii)} $\widetilde{N}$ is called co-isotropic, if $s=q<p$ , that is,
$S(T\widetilde{N}^{\bot})=\{0\},$

\textit{(iii)} $\widetilde{N}$ is called isotropic, if $s=p<q$, that is,
$S(T\widetilde{N})=\{0\},$

\textit{(iv)} $\widetilde{N}$ is called totally lightlike, if $s=p=q,$ that
is, $S(T\widetilde{N})=\{0\}=S(T\widetilde{N}^{\bot}).$

Now, let $\overline{\nabla}$ be the Levi-Civita connection on $\overline{N}.$
According to (\ref{3}), for $\forall X,Y\in\Gamma(T\widetilde{N})$ and
$\forall V\in\Gamma(tr(T\widetilde{N})),$ the Gauss and Weingarten formula of
$\widetilde{N}$ are given by%
\begin{equation}
\overline{\nabla}_{X}Y=\nabla_{X}Y+h(X,Y) \label{4}%
\end{equation}
and%

\begin{equation}
\overline{\nabla}_{X}V=-A_{V}X+\nabla_{X}^{t}V, \label{5}%
\end{equation}
where $\{\nabla_{X}Y,A_{V}X\}$ and $\{h(X,Y),\nabla_{X}^{t}V\}$ are belong to
$\Gamma(T\widetilde{N})$ and $\Gamma(tr(T\widetilde{N})),$ respectively. Also,
$\nabla$ and $\nabla^{t}$ are linear connections on $\widetilde{N}$ and on the
vector bundle $tr(T\widetilde{N})$, respectively. The second fundamental form
$h$ is a symmetric $\mathcal{F}(\widetilde{N})$-bilinear form on
$\Gamma(T\widetilde{N})$ with values in $\Gamma(tr(T\widetilde{N}))$ and the
shape operator $A_{V}$ is a linear endomorphism of $\Gamma(T\widetilde{N})$.
Then, for $\forall X,Y\in\Gamma(T\widetilde{N}),$ $N\in\Gamma(ltr(T\widetilde
{N}))$ and $Z\in\Gamma(S(T\widetilde{N}^{\bot})),$ we get%
\begin{align}
\overline{\nabla}_{X}Y  &  =\nabla_{X}Y+h^{l}(X,Y)+h^{s}(X,Y),\label{6}\\
\overline{\nabla}_{X}N  &  =-A_{N}X+\nabla_{X}^{l}(N)+D^{s}(X,N),\label{6'}\\
\overline{\nabla}_{X}Z  &  =-A_{Z}X+D^{l}(X,Z)+\nabla_{X}^{s}(Z), \label{6''}%
\end{align}
where $\left\{  \nabla_{X}^{l}(N),D^{l}(X,Z)\right\}  $ and $\left\{
D^{s}(X,N),\nabla_{X}^{s}(Z)\right\}  $ are parts of $ltr(T\widetilde{N})$ and
$S(T\widetilde{N}^{\perp}),$ respectively. Also, $h^{l}(X,Y)$ $=Lh(X,Y)\in
\Gamma(ltr(T\widetilde{N}))$ and $h^{s}(X,Y)=Sh(X,Y)\in\Gamma(S(T\widetilde
{N}^{\perp})),$ where $L$ and $S$ are the projectors of transversal vector
bundle $tr(T\widetilde{N})$ on $ltr(T\widetilde{N})$ and $S(T\widetilde
{N}^{\perp}).$ Denote the projection of $T\widetilde{N}$ on $S(T\widetilde
{N})$ by $Q$. Thus, using (\ref{4}), (\ref{6}), (\ref{6''}) and considering
the metric connection $\overline{\nabla},$ we have%
\begin{align}
\overline{\rho}(h^{s}(X,Y),Z)+\overline{\rho}(Y,D^{l}(X,Z))  &  =\rho
(A_{Z}X,Y),\label{7}\\
\overline{\rho}(D^{s}(X,N),Z)  &  =\overline{\rho}(N,A_{Z}X) \label{7'}%
\end{align}
and%
\begin{align}
\nabla_{X}QY  &  =\nabla_{X}^{\ast}QY+h^{\ast}(X,QY),\text{ }\label{7''}\\
\nabla_{X}\xi &  =-A_{\xi}^{\ast}X+\nabla_{X}^{\ast^{t}}\xi, \label{7'''}%
\end{align}
where $\forall X,Y\in\Gamma(T\widetilde{N})$, $\xi\in\Gamma(Rad(T\widetilde
{N}))$ and $\nabla^{\ast}$ is induced connection on $S(T\widetilde{N})$ which
is a metric connection and $\nabla^{\ast^{t}}$ is induced connection on
$Rad(T\widetilde{N}).$ Using (\ref{7}), (\ref{7'}), (\ref{7''}) and
(\ref{7'''}), we get%
\begin{align}
\overline{\rho}(h^{l}(X,QY),\xi)  &  =\rho(A_{\xi}^{\ast}X,QY),\text{
\ \ }\overline{\rho}(h^{\ast}(X,QY),N)=\rho(A_{N}X,QY),\label{8}\\
\overline{\rho}(h^{l}(X,\xi),\xi)  &  =0\text{ and }A_{\xi}^{\ast}\xi=0.
\label{8'}%
\end{align}

Also, the induced connection on lightlike submanifold $\widetilde{N}$ is
torsion-free, but it is not metric connection and satisfies the following
condition%
\begin{equation}
(\nabla_{X}\rho)(Y,Z)=\overline{\rho}(h^{l}(X,Y),Z)+\overline{\rho}%
(h^{l}(X,Z),Y). \label{9}%
\end{equation}

For more details and studies about lightlike submanifolds, one can see
\cite{Bejancu}, \cite{Bayrm2}, \cite{K1}, \cite{K2}, \cite{K3}, \cite{Sahin1},
\cite{Sahin2}, \cite{Sahin3}, \cite{Sahin4}, \cite{Sahin5}.

On the other hand, an indefinite almost Hermitian manifold $(\overline
{N},\overline{\rho},\overline{J})$ is a $2m$-dimensional semi-Riemannian
manifold $\overline{N}$ with a semi-Riemannian metric $\rho$ of the constant
index $q$, $0<q<2m$ and a $(1,1)$ tensor field $\overline{J}$ on $\overline
{N}$ such that for $\forall X,Y\in\Gamma(T\overline{N})$ the following
conditions are satisfied%
\begin{align}
\overline{J}^{2}X  &  =-X,\label{10}\\
\overline{\rho}(\overline{J}X,\overline{J}Y)  &  =\overline{\rho}(X,Y).
\label{10'}%
\end{align}

If $\overline{J}$ is parallel with respect to $\overline{\nabla}$, that is,
\begin{equation}
(\overline{\nabla}_{X}\overline{J})Y=0, \label{11}%
\end{equation}
then an indefinite almost Hermitian manifold $(\overline{N},\overline{\rho
},\overline{J})$ is called an indefinite Kaehler manifold \cite{Barros}, where
$\forall X,Y\in\Gamma(T\overline{N})$ and $\overline{\nabla}$ is the
Levi-Civita connection with respect to $\overline{\rho}$.

\section{ \textbf{SCREEN ALMOST SEMI-INVARIANT LIGHTLIKE SUBMANIFOLDS OF
INDEFINITE KAEHLER MANIFOLDS}}

In this section, we introduce screen almost semi-invariant (SASI) lightlike
submanifolds, give an example and obtain some characterizations. We give the
necessary and sufficient condition for the induced connection which is not
metric connection in general. Finally, we investigate the notion of mixed
geodesic for (SASI) lightlike submanifolds.

\begin{definition}
Let $(\widetilde{N},\rho,S(T\widetilde{N}))$ be a lightlike submanifold of an
indefinite Kaehler manifold $(\overline{N},\overline{\rho},\overline{J})$.
Then we say that $\widetilde{N}$ is a SASI-lightlike submanifold of
$\overline{N},$ if the following conditions are satisfied:

\textbf{i)} $D_{0},D_{1},D$ and $D^{\ast}$ are orthogonal distributions on
$S(T\widetilde{N})$ such that
\[
S(T\widetilde{N})=D\oplus_{\perp}D^{\ast}\oplus_{\perp}D^{^{\prime\prime}},
\]
where $\overline{J}D=D$, $D^{\ast}$ anti-invariant and $D^{^{\prime\prime}%
}=D_{0}\oplus_{\perp}D_{1}$. Also $\overline{J}D^{^{\prime\prime}}\nsubseteq
S(T\widetilde{N})$ and $\overline{J}D^{^{\prime\prime}}\nsubseteq
S(T\widetilde{N}^{\perp})$ such that the ditribution $D^{^{\prime\prime}}$ is
neither invariant nor anti-invariant.

\textbf{ii) }$\overline{J}(Rad(T\widetilde{N}))=Rad(T\widetilde{N})$, that is
$Rad(T\widetilde{N})$ is invariant respect to $\overline{J}$.

\textbf{iii) }$S(T\widetilde{N}^{\perp})=$ $\overline{J}D^{\ast}\oplus_{\perp
}\mu$, $\mu=D_{2}\oplus_{\perp}D_{3}$, $\overline{J}\mu\neq\mu$.
\end{definition}

Also, we have $\overline{J}ltr(T\widetilde{N})=ltr(T\widetilde{N})$ and the
following decomposition%
\[
T\widetilde{N}=D^{^{\prime}}\oplus_{\perp}D^{\ast}\oplus_{\perp}%
D^{^{\prime\prime}},
\]
where $D^{^{\prime}}=D\oplus_{\perp}Rad(T\widetilde{N})$ and $D^{^{\prime}}$
is invariant with respect to $\overline{J}$.

We denote the orthogonal complement to $\overline{J}D^{\ast}$ in
$S(T\widetilde{N}^{\perp})$ by $\mu$. Then, we have%
\[
tr(T\widetilde{N})=ltr(T\widetilde{N})\oplus_{\perp}\overline{J}D^{\ast}%
\oplus_{\perp}\mu.
\]

\begin{proposition}
A SASI-lightlike submanifold $\widetilde{N}$ of an indefinite Kaehler manifold
$\overline{N}$ is a SSI-lightlike submanifold if and only if $D_{0}$ and
$D_{1}$ (similarly $D_{2}$\ and $D_{3}$) are invariant distributions.
\end{proposition}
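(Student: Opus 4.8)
The plan is to compare, side by side, the two orthogonal decompositions of the screen distribution and of the screen transversal bundle that the SSI and the SASI structures impose, and to transfer information between them using only the two algebraic identities available for the almost complex structure, namely $\overline{J}^{2}=-I$ from (\ref{10}) and the isometry property $\overline{\rho}(\overline{J}X,\overline{J}Y)=\overline{\rho}(X,Y)$ from (\ref{10'}). For any $X\in\Gamma(S(T\widetilde{N}))$ I would write $\overline{J}X=bX+cX$, where $bX$ is the component tangent to $S(T\widetilde{N})$ and $cX$ the component in $S(T\widetilde{N}^{\perp})$; invariance of a screen subdistribution is then the statement that $c\equiv0$ on it, and anti-invariance is $b\equiv0$.

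For the direction ($\Leftarrow$), assume $D_{0}$ and $D_{1}$ are invariant. Then $D^{^{\prime\prime}}=D_{0}\oplus_{\perp}D_{1}$ is invariant, so $\overline{J}D^{^{\prime\prime}}=D^{^{\prime\prime}}$. Put $\mathcal{D}:=D\oplus_{\perp}D^{^{\prime\prime}}$ and $\mathcal{D}^{\perp}:=D^{\ast}$. Since $D$ and $D^{^{\prime\prime}}$ are invariant and mutually orthogonal, $\mathcal{D}$ is invariant, while $D^{\ast}$ is anti-invariant with $\overline{J}D^{\ast}\subset S(T\widetilde{N}^{\perp})$ by condition iii). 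Thus $S(T\widetilde{N})=\mathcal{D}\bot\mathcal{D}^{\perp}$ with $\overline{J}\mathcal{D}=\mathcal{D}$, $\overline{J}\mathcal{D}^{\perp}\subset S(T\widetilde{N}^{\perp})$ and $\mathcal{D}\cap\mathcal{D}^{\perp}=\{0\}$; together with condition ii), this is exactly the SSI definition. The invariance of $D_{2}$ and $D_{3}$ is the matching statement on the transversal side: it forces $\overline{J}\mu=\mu$, so that $tr(T\widetilde{N})=ltr(T\widetilde{N})\oplus_{\perp}\overline{J}D^{\ast}\oplus_{\perp}\mu$ reproduces the SSI transversal splitting $tr(TM)=ltr(TM)\bot FD^{\bot}\bot D_{0}$ with $\mu$ invariant.

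For the direction ($\Rightarrow$), assume $\widetilde{N}$ is SSI, so $S(T\widetilde{N})=\mathcal{D}\bot\mathcal{D}^{\perp}$ with $\mathcal{D}$ invariant and $\mathcal{D}^{\perp}$ anti-invariant. First I would identify $\mathcal{D}^{\perp}=D^{\ast}$: the anti-invariant screen distribution is $\ker(b|_{S(T\widetilde{N})})$, and since every nonzero vector of $D^{^{\prime\prime}}$ has $bX\neq0$ (its neither-invariant-nor-anti-invariant character) while $D$ is invariant, the orthogonal decomposition of condition i) forces $\ker b=D^{\ast}$. Hence $\mathcal{D}=D\oplus_{\perp}D^{^{\prime\prime}}$ is invariant, and for $X\in D^{^{\prime\prime}}$ the tangential part $bX$ stays in $D^{^{\prime\prime}}$ while, for $Y\in D^{\ast}$,
\[
\overline{\rho}(cX,\overline{J}Y)=\overline{\rho}(\overline{J}X,\overline{J}Y)-\overline{\rho}(bX,\overline{J}Y)=\overline{\rho}(X,Y)=0,
\]
because $bX$ is tangent and $\overline{J}Y\in S(T\widetilde{N}^{\perp})$; so the transversal component lands in $\mu$. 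Invariance of $\mathcal{D}$ then forces $cX=0$ on all of $D^{^{\prime\prime}}$, and the $\overline{J}$-pairing between $D_{0},D_{1}$ and $D_{2},D_{3}$ supplied by conditions i) and iii) converts this into $\overline{J}D_{0}=D_{0}$ and $\overline{J}D_{1}=D_{1}$. The same computation run on $S(T\widetilde{N}^{\perp})$ yields the equivalence with $\overline{J}D_{2}=D_{2}$ and $\overline{J}D_{3}=D_{3}$.

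The main obstacle is the decoupling step in the forward direction: knowing only that the full distribution $D^{^{\prime\prime}}=D_{0}\oplus_{\perp}D_{1}$ is invariant does not by itself prevent $\overline{J}$ from rotating $D_{0}$ into $D_{1}$, so I must invoke the precise definitions of $D_{0},D_{1}$ and their $\overline{J}$-correspondence with $D_{2},D_{3}$ in $\mu$, together with the fact that $\overline{J}$ preserves $\overline{\rho}$ and hence the causal character of each factor, to exclude such mixing. Identifying $\mathcal{D}^{\perp}$ with $D^{\ast}$ rather than with some larger anti-invariant distribution is the other delicate point, and it is precisely here that the neither-invariant-nor-anti-invariant hypothesis on $D^{^{\prime\prime}}$ enters essentially.
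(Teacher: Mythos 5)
Your converse direction ($\Leftarrow$) is essentially the paper's argument: once $D_{0},D_{1}$ (and $D_{2},D_{3}$) are invariant you regroup $S(T\widetilde{N})$ as $(D\oplus_{\perp}D^{\prime\prime})\bot D^{\ast}$ and $tr(T\widetilde{N})$ as $ltr(T\widetilde{N})\bot\overline{J}D^{\ast}\bot\mu$ and read off the SSI conditions; that part is fine. The genuine gap is the last step of your forward direction. Your computation legitimately gets you to $cX=0$ for all $X\in\Gamma(D^{\prime\prime})$, i.e.\ to the invariance of the \emph{sum} $D^{\prime\prime}=D_{0}\oplus_{\perp}D_{1}$, and you correctly flag that this does not yet give invariance of $D_{0}$ and $D_{1}$ separately. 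But the ``precise definitions'' you propose to invoke in order to decouple them do not exist: $D_{0}$ and $D_{1}$ are only ever specified through the relations accompanying (\ref{b}), namely $\overline{J}D_{0}=D_{1}\oplus_{\perp}D_{2}$ and $\overline{J}D_{1}=D_{0}\oplus_{\perp}D_{3}$. Feeding $c\equiv 0$ into these relations gives $D_{2}=D_{3}=\{0\}$ and $\overline{J}D_{0}=D_{1}$, $\overline{J}D_{1}=D_{0}$ --- that is, $\overline{J}$ \emph{interchanges} $D_{0}$ and $D_{1}$ rather than preserving each of them, so the literal conclusion $\overline{J}D_{0}=D_{0}$ does not follow (taken literally it would force $D_{0}\subseteq D_{1}\oplus_{\perp}D_{2}$ and hence $D_{0}=\{0\}$). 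Your fallback, that $\overline{J}$ preserves $\overline{\rho}$ and hence the causal character of each factor and therefore cannot rotate $D_{0}$ into $D_{1}$, also fails, since nothing prevents $D_{0}$ and $D_{1}$ from having the same signature.

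For comparison, the paper's forward direction is a different (and equally terse) argument: from the SSI splitting it asserts the dichotomy that $D_{0}$ and $D_{1}$ are either zero or invariant, and excludes the zero case by claiming that $\mu$ cannot vanish for an SSI submanifold. So the decoupling difficulty you identified is real and is not genuinely resolved in the paper either; but as a standalone proof your forward direction stops one step short of the stated conclusion, and the route you sketch for closing it cannot be carried out with the data the definition provides.
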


\begin{proof}
Let $\widetilde{N}$ be a SSI-lightlike submanifold of $\overline{N}.$ Then
form \cite{Erol}, we have $S(T\widetilde{N})=D\bot D^{\ast},$ $\overline
{J}D=D,$ $\overline{J}D^{\ast}\subset S(T\widetilde{N}^{\bot})$ and\ $D\cap
D^{\ast}=\{0\}.$ In this case, either $D_{0}$ and $D_{1}$ are zero or $D_{0}$
and $D_{1}$ are invariant. If $D_{0}$ and $D_{1}$ are zero, then $\mu$ is
zero. But, since $\widetilde{N}$ is the SSI-lightlike submanifold, $\mu$
cannot be zero. Therefore, $D_{0}$ and $D_{1}$ must be invariant. This imply
that $\mu=D_{2}\oplus_{\bot}D_{3}$ is invariant. Conversely, assume that
$D_{0}$ and $D_{1}$ (similarly $D_{2}$\ and $D_{3}$) are invariant
distributions. Then, we get $D^{^{\prime}}=D\oplus_{\bot}Rad(T\widetilde
{N})\oplus_{\bot}D_{0}\oplus_{\bot}D_{1}.$ Thus, we obtain%
\[
T\widetilde{N}=D^{^{\prime}}\oplus_{\bot}D^{\ast}.
\]

Since we have $\mu=D_{2}\oplus_{\bot}D_{3}$, $\mu\subset S(T\widetilde
{N}^{\bot}).$ Here, we get $S(T\widetilde{N})=\widetilde{D}\bot D^{\ast},$
where $\widetilde{D}=D\oplus_{\bot}D_{0}\oplus_{\bot}D_{1}.$ Then,
$\widetilde{N}$ is a SSI-lightlike submanifold of $\overline{N}.$ Thus, the
proof completes.
\end{proof}

Now, we will constant an example of SASI-lightlike submanifold in $%
\mathbb{R}
_{2}^{12}.$

\begin{example}
Let $(%
\mathbb{R}
_{2}^{12},\overline{\rho},\overline{J})$ be an indefinite Kaehler manifold
with signature $(-,-,+,+,+,\newline+,+,+,+,+,+,+)$ and let $\widetilde{N}$ be
a submanifold of $%
\mathbb{R}
_{2}^{12}$ given by%
\begin{align*}
F(u,v,t,m,n,l,w)  &  =(u,v,-\cos t,\sin t,-t\cos m,-t\sin m,n\cos\beta
-w\sin\beta,\\
&  n\sin\beta+w\cos\beta,\cos l,\sin l,u\cos\alpha-v\sin\alpha,u\sin
\alpha+v\cos\alpha).
\end{align*}

Then we have $T\widetilde{N}=Sp\left\{  F_{1},F_{2},F_{3},F_{4},F_{5}%
,F_{6},F_{7}\right\}  $ such that%
\begin{align*}
F_{1}  &  =\partial x_{1}+\cos\alpha\partial x_{11}+\sin\alpha\partial
x_{12},\\
F_{2}  &  =\partial x_{2}-\sin\alpha\partial x_{11}+\cos\alpha\partial
x_{12},\\
F_{3}  &  =\sin t\partial x_{3}+\cos t\partial x_{4}-\cos m\partial x_{5}-\sin
m\partial x_{6},\\
F_{4}  &  =t\sin m\partial x_{5}-t\cos m\partial x_{6},\\
F_{5}  &  =\cos\beta\partial x_{7}+\sin\beta\partial x_{8},\\
F_{6}  &  =-\sin\beta\partial x_{7}+\cos\beta\partial x_{8},\\
F_{7}  &  =-\sin l\partial x_{9}+\cos l\partial x_{10}.
\end{align*}

Since $Rad(T\widetilde{N})=Sp\left\{  F_{1},F_{2}\right\}  $ and $\overline
{J}F_{1}=F_{2}$, one can easily see that\ $\widetilde{N}$ is a 2- lightlike
submanifold. Now, we have $D=Sp\left\{  F_{5},F_{6}\right\}  $ such that
$\overline{J}F_{5}=F_{6}$ which implies that $D$ is invariant with respect to
$\overline{J}$. Also, we get $D^{\ast}=Sp\left\{  F_{7}\right\}  $ and
$D^{^{\prime\prime}}=Sp\left\{  F_{3},F_{4}\right\}  $. On the other hand, we
obtaine the lightlike transversal bundle $ltr(T\widetilde{N})=Sp\left\{
N_{1},N_{2}\right\}  $, where%
\begin{align*}
N_{1}  &  =\frac{1}{2}\left\{  -\partial x_{1}+\cos\alpha\partial x_{11}%
+\sin\alpha\partial x_{12}\right\}  ,\\
N_{2}  &  =\frac{1}{2}\left\{  -\partial x_{2}-\sin\alpha\partial x_{11}%
+\cos\alpha\partial x_{12}\right\}
\end{align*}
and the screen transversal bundle $S(T\widetilde{N}^{\perp})=Sp\left\{
W_{1},W_{2},W_{3}\right\}  $, where%
\begin{align*}
W_{1}  &  =-\cos t\partial x_{3}+\sin t\partial x_{4},\text{ \ \ \ \ \ \ \ \ }%
W_{2}=-\cos l\partial x_{9}+\sin l\partial x_{10},\\
W_{3}  &  =\sin t\partial x_{3}+\cos t\partial x_{4}+\cos m\partial
x_{5}\text{+}\sin m\partial x_{6}\text{.\ \ \ }%
\end{align*}

Here we get $\overline{J}F_{7}=W_{2}$, $\overline{J}N_{1}=N_{2}$ and
$\mu=Sp\left\{  W_{1},W_{3}\right\}  .$ Then, since
\[
\overline{J}F_{3}=W_{1}+\frac{1}{t}F_{4},\ \ \overline{J}F_{4}=\frac{t}%
{2}W_{3}-\frac{t}{2}F_{3}%
\]
and
\[
\overline{J}W_{1}=-\frac{1}{2}F_{3}-\frac{1}{2}W_{3},\text{ \ \ }\overline
{J}W_{3}=W_{1}-\frac{1}{t}F_{4}%
\]
we obtain that $\widetilde{N}$ is a SASI-lightlike submanifold of \ $%
\mathbb{R}
_{2}^{12}.$
\end{example}

Now, let $P_{0},P_{1},P_{2},P_{3}$ and $S_{1}$ be the projections on
$D,D^{\ast},D_{0},D_{1}$ and $Rad(T\widetilde{N})$ in $T\widetilde{N}$,
respectively. Similarly, let $R_{1},R_{2}$ and $R_{3}$ be the the projection
on $\overline{J}D^{\ast},D_{2}$ and $D_{3\text{ }}$in $tr(T\widetilde{N})$,
respectively. Then for any $X\in\Gamma(T\widetilde{N}),$ we get%
\begin{align}
X  &  =PX+RX+QX\nonumber\\
&  =P_{0}X+P_{1}X+P_{2}X+P_{3}X+S_{1}X, \label{a}%
\end{align}
where $PX\in\Gamma(D^{^{\prime}}),$ $RX\in\Gamma(D^{^{\ast}})$ and
$QX\in\Gamma(D^{^{^{\prime\prime}}}).$ \bigskip

On the other hand, one can write that
\begin{equation}
\overline{J}X=TX+wX, \label{a!}%
\end{equation}
where $TX$ and $wX$ are the tangential and transversal components of
$\overline{J}X$, respectively. Then applying $\overline{J}$ to equation
(\ref{a}) and substituting $T_{0},w_{1},T_{2}+w_{2},T_{3}+w_{3},T_{1}$ for
$\overline{J}P_{0},\overline{J}P_{1},\newline\overline{J}P_{2},\overline
{J}P_{3},\overline{J}S_{1},$ respectively, we have%
\begin{equation}
\overline{J}X=T_{0}X+w_{1}X+T_{2}X+w_{2}X+T_{3}X+w_{3}X+T_{1}X, \label{b}%
\end{equation}
where it is clear that $T_{0}X\in\Gamma(\overline{J}D),$ $w_{1}X\in
\Gamma(\overline{J}D^{\ast}),$ $(T_{2}X+w_{2}X)\in\Gamma(\overline{J}%
D_{0}=D_{1}\oplus_{\perp}D_{2}),$ $(T_{3}X+w_{3}X)\in\Gamma(\overline{J}%
D_{1}=D_{0}\oplus_{\perp}D_{3}),$ $T_{1}X\in\Gamma(\overline{J}%
(Rad(T\widetilde{N}))).$

\bigskip

Similarly, for any $\Omega\in\Gamma(tr(T\widetilde{N})),$ it is known that
\begin{equation}
\overline{J}\Omega=B\Omega+C\Omega, \label{c}%
\end{equation}
where $B\Omega$ and $C\Omega$ are the sections of $T\widetilde{N}$ and
$tr(T\widetilde{N})$, respectively. Thus, differentiating (\ref{b}) and using
the equations Gauss-Weingarten in (\ref{6}-\ref{6''}) and (\ref{c}), we obtain
the following equations%
\begin{align}
(\bigtriangledown_{X}T)Y=A_{w_{1}Y}X+A_{w_{2}Y}X+A_{w_{3}Y}X+Bh(X,Y),\label{d}%
\\
D^{l}(X,w_{1}Y)+D^{l}(X,w_{2}Y)+D^{l}(X,w_{3}Y)=Ch^{l}(X,Y)-h^{l}(X,TY)
\label{e}%
\end{align}
and
\begin{equation}
h^{s}(X,TY)+\bigtriangledown_{X}^{s}w_{1}Y+\bigtriangledown_{X}^{s}%
w_{2}Y+\bigtriangledown_{X}^{s}w_{3}Y-w(\bigtriangledown_{X}Y)-Ch^{s}(X,Y)=0.
\label{f}%
\end{equation}

\begin{theorem}
Let $\widetilde{N}$ be a SASI-lightlike submanifold (not proper) of an
indefinite Kaehler manifold $\overline{N}.$ Then, we have the following statements

\textbf{i) }If\textbf{ }$\widetilde{N}$ is isotropic, coisotropic or totally
lightlike submanifold, then $\widetilde{N}$ is invariant,

\textbf{ii) }If\textbf{ }$\widetilde{N}$ is an isotropic lightlike
submanifold, then $\widetilde{N}$ is also a totally lightlike submanifold.
\end{theorem}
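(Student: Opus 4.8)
The plan is to read off, in each of the three cases, which of the building-block distributions $D$, $D^{\ast}$, $D''$ (together with the finer pieces $D_{0},D_{1},D_{2},D_{3}$ and $\mu$) are forced to vanish, and then to check that the surviving summands of $T\widetilde{N}$ are each invariant under $\overline{J}$. Throughout I would lean on two facts: that $\overline{J}$ is a bundle isomorphism (so $\overline{J}E=\{0\}$ if and only if $E=\{0\}$), and that, by the decomposition following (\ref{b}), one has $\overline{J}D_{0}=D_{1}\oplus_{\perp}D_{2}$ and $\overline{J}D_{1}=D_{0}\oplus_{\perp}D_{3}$, while $\overline{J}D'=D'$ and $\overline{J}(Rad(T\widetilde{N}))=Rad(T\widetilde{N})$ hold by the definition and the remark after it.

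For part (i), I would first treat the isotropic and totally lightlike cases together, since both give $S(T\widetilde{N})=\{0\}$. Because $S(T\widetilde{N})=D\oplus_{\perp}D^{\ast}\oplus_{\perp}D''$, this forces $D=D^{\ast}=D''=\{0\}$, so $T\widetilde{N}=D'\oplus_{\perp}D^{\ast}\oplus_{\perp}D''=Rad(T\widetilde{N})$; invariance of $Rad(T\widetilde{N})$ from condition (ii) then gives $\overline{J}T\widetilde{N}=T\widetilde{N}$. For the coisotropic case I would instead start from $S(T\widetilde{N}^{\perp})=\{0\}$. By condition (iii), $S(T\widetilde{N}^{\perp})=\overline{J}D^{\ast}\oplus_{\perp}\mu$ with $\mu=D_{2}\oplus_{\perp}D_{3}$, so $\overline{J}D^{\ast}=\{0\}$ and $D_{2}=D_{3}=\{0\}$. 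The first yields $D^{\ast}=\{0\}$; feeding $D_{2}=D_{3}=\{0\}$ into the identities $\overline{J}D_{0}=D_{1}\oplus_{\perp}D_{2}$ and $\overline{J}D_{1}=D_{0}\oplus_{\perp}D_{3}$ gives $\overline{J}D_{0}=D_{1}$ and $\overline{J}D_{1}=D_{0}$, so that $D''=D_{0}\oplus_{\perp}D_{1}$ is now invariant. Hence $T\widetilde{N}=D'\oplus_{\perp}D''$ with both summands $\overline{J}$-invariant, and since $\overline{J}$ is an isometry it preserves the orthogonal sum, so $\overline{J}T\widetilde{N}=T\widetilde{N}$.

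For part (ii), I would take $\widetilde{N}$ isotropic, so once more $S(T\widetilde{N})=\{0\}$ and hence $D^{\ast}=\{0\}$ and $D_{0}=D_{1}=\{0\}$. Applying $\overline{J}$ to the last two and using $\overline{J}D_{0}=D_{1}\oplus_{\perp}D_{2}$, $\overline{J}D_{1}=D_{0}\oplus_{\perp}D_{3}$ forces $D_{1}\oplus_{\perp}D_{2}=\{0\}$ and $D_{0}\oplus_{\perp}D_{3}=\{0\}$, whence $D_{2}=D_{3}=\{0\}$ and $\mu=\{0\}$. Together with $\overline{J}D^{\ast}=\{0\}$, condition (iii) then gives $S(T\widetilde{N}^{\perp})=\overline{J}D^{\ast}\oplus_{\perp}\mu=\{0\}$, so $\widetilde{N}$ is totally lightlike.

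Since every step is of the same ``vanishing-distribution'' bookkeeping type, no analytic difficulty should arise; the delicate point, and the step I expect to be the main obstacle, is the coisotropic sub-case of (i). There one must use the ``not proper'' hypothesis to permit $D''$ to degenerate into an invariant distribution (a proper SASI-submanifold forbids this, since by definition $D''$ is neither invariant nor anti-invariant), and then apply the identities $\overline{J}D_{0}=D_{1}\oplus_{\perp}D_{2}$ and $\overline{J}D_{1}=D_{0}\oplus_{\perp}D_{3}$ carefully to see that, once $\mu$ vanishes, $\overline{J}$ genuinely interchanges $D_{0}$ and $D_{1}$.
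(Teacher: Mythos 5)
Your proposal is correct and follows essentially the same route as the paper: a case-by-case bookkeeping of which of $D$, $D^{\ast}$, $D''$, $D_{0},D_{1},D_{2},D_{3}$ and $\mu$ are forced to vanish, followed by invariance of the surviving summands. You are in fact more explicit than the paper in the coisotropic case (the paper simply asserts $T\widetilde{N}=D'$, implicitly taking $D''=\{0\}$, whereas you also cover the possibility that $D''$ survives with $\overline{J}$ interchanging $D_{0}$ and $D_{1}$) and in part (ii), which the paper dismisses as obvious from (i).
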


\begin{proof}
\textit{i)} Let $\widetilde{N}$ be a SASI-lightlike submanifold (not proper)
of an indefinite Kaehler manifold $\overline{N}.$ If $\widetilde{N}$ is
isotropic, then $S(T\widetilde{N})=\left\{  0\right\}  $ which implies that
$D=\left\{  0\right\}  ,$ $D^{\ast}=\left\{  0\right\}  $ and $D^{^{\prime
\prime}}=\left\{  0\right\}  .$ So, we have $T\widetilde{N}=Rad(T\widetilde
{N})=\overline{J}(Rad(T\widetilde{N})),$ which is invariant respect to
$\overline{J}$. If $\widetilde{N}$ is coisotropic, then $S(T\widetilde
{N}^{\perp})$ $=\left\{  0\right\}  $ implies $%
\mu
=\left\{  0\right\}  ,$ then $T\widetilde{N}=D^{^{\prime}}=D\oplus_{\perp
}Rad(T\widetilde{N})$ such that $\widetilde{N}$ is invariant. And if
$\widetilde{N}$ is totally lightlike, since both $S(T\widetilde{N})=\left\{
0\right\}  $ and $S(T\widetilde{N}^{\perp})$ $=\left\{  0\right\}  ,$ we get
$T\widetilde{N}=Rad(T\widetilde{N})=\overline{J}(Rad(T\widetilde{N})).$
Consequently $\widetilde{N}$ is invariant.

\textit{ii) }The proof is obvious form \textit{(i)}.
\end{proof}

\begin{theorem}
\label{A}Let $\widetilde{N}$ be a SASI-lightlike submanifold of an indefinite
Kaehler manifold $\overline{N}.$ The induced connection $\nabla$ is a metric
connection if and only if
\[
(A_{\overline{J}Y}^{\ast}X-\bigtriangledown_{X}^{\ast^{t}}\overline{J}%
Y)\in\Gamma(Rad(T\widetilde{N}))\text{ \ \ \ and \ \ \ }Bh(X,\overline
{J}Y)=0,
\]
where $X\in\Gamma(T\widetilde{N})$ and $Y\in\Gamma(Rad(T\widetilde{N})).$
\end{theorem}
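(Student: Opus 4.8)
The plan is to characterize when the induced connection $\nabla$ is a metric connection, which by equation (\ref{9}) happens precisely when $h^{l}(X,Y)=0$ for all $X\in\Gamma(T\widetilde{N})$ and all $Y\in\Gamma(Rad(T\widetilde{N}))$; indeed $(\nabla_X\rho)(Y,Z)$ vanishing for the relevant arguments reduces to controlling the lightlike part of the second fundamental form on the radical distribution. So the real content is to translate the vanishing of $h^{l}$ on $Rad(T\widetilde{N})$ into the two stated conditions involving $A^{\ast}_{\overline{J}Y}$, $\nabla^{\ast^{t}}$ and $Bh$.

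First I would take $Y\in\Gamma(Rad(T\widetilde{N}))$ and exploit the invariance $\overline{J}(Rad(T\widetilde{N}))=Rad(T\widetilde{N})$ from condition \textbf{ii)} of the definition, so that $\overline{J}Y$ is again a section of the radical distribution. The idea is to compute $\overline{\nabla}_X(\overline{J}Y)$ in two ways. On one side, since $\overline{N}$ is Kaehler we have $(\overline{\nabla}_X\overline{J})Y=0$ by (\ref{11}), hence $\overline{\nabla}_X(\overline{J}Y)=\overline{J}(\overline{\nabla}_X Y)=\overline{J}(\nabla_X Y+h^l(X,Y)+h^s(X,Y))$ using the Gauss formula (\ref{6}). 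On the other side, because $\overline{J}Y\in\Gamma(Rad(T\widetilde{N}))$, I would apply the decomposition (\ref{7'''}), namely $\nabla_X(\overline{J}Y)=-A^{\ast}_{\overline{J}Y}X+\nabla^{\ast^{t}}_X(\overline{J}Y)$, together with $\overline{\nabla}_X(\overline{J}Y)=\nabla_X(\overline{J}Y)+h^l(X,\overline{J}Y)+h^s(X,\overline{J}Y)$.

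Next I would apply $\overline{J}$ to the quantities on the Kaehler side and split them using (\ref{a!}) and (\ref{c}): writing $\overline{J}$ of a transversal vector via $\overline{J}\Omega=B\Omega+C\Omega$ turns $\overline{J}h^l(X,Y)$ and $\overline{J}h^s(X,Y)$ into tangential parts $Bh^l(X,Y),Bh^s(X,Y)$ and transversal parts $Ch^l(X,Y),Ch^s(X,Y)$. Equating the two expressions for $\overline{\nabla}_X(\overline{J}Y)$ and comparing components according to the splitting $T\overline{N}|_{\widetilde{N}}=Rad\oplus ltr \perp S(T\widetilde{N})\perp S(T\widetilde{N}^{\perp})$ should isolate $h^l(X,Y)$ in terms of $A^{\ast}_{\overline{J}Y}X$, $\nabla^{\ast^{t}}_X(\overline{J}Y)$ and $Bh(X,\overline{J}Y)$. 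The claim is that $h^l(X,Y)=0$ for all $Y\in\Gamma(Rad)$ is equivalent to the tangential radical piece $A^{\ast}_{\overline{J}Y}X-\nabla^{\ast^{t}}_X\overline{J}Y$ lying in $Rad(T\widetilde{N})$ together with $Bh(X,\overline{J}Y)=0$. I expect the main obstacle to be the bookkeeping: tracking where $Bh^l$, $Bh^s$, $Ch^l$, $Ch^s$ land among the four summands and verifying that the tangential components not in $Rad$ force exactly the membership condition $(A^{\ast}_{\overline{J}Y}X-\nabla^{\ast^{t}}_X\overline{J}Y)\in\Gamma(Rad(T\widetilde{N}))$, while the lightlike transversal component forces $Bh=0$. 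Finally I would reverse the implications, showing that the two conditions make $h^l$ vanish on the radical and hence, via (\ref{9}), render $\nabla$ metric, completing the equivalence.
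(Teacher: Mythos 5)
Your central computation is the same as the paper's: for $Y\in\Gamma(Rad(T\widetilde{N}))$ you use the Kaehler identity (\ref{11}) to trade $\overline{\nabla}_{X}Y$ for $-\overline{J}(\overline{\nabla}_{X}\overline{J}Y)$, note that $\overline{J}Y$ is again radical by condition (ii) of the definition so that (\ref{7'''}) applies, expand with the Gauss formula (\ref{6}), and then split $\overline{J}$ of the result with the operators $T,w$ of (\ref{a!}) and $B,C$ of (\ref{c}). Taking tangential parts this yields exactly the paper's identity $\nabla_{X}Y=TA_{\overline{J}Y}^{\ast}X-T\nabla_{X}^{\ast^{t}}\overline{J}Y-Bh(X,\overline{J}Y)$, from which the two stated conditions are read off. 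So the engine of your argument is correct and matches the paper.

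The genuine gap is in your opening reduction of ``$\nabla$ is a metric connection.'' From (\ref{9}), $(\nabla_{X}\rho)(Y,Z)=\overline{\rho}(h^{l}(X,Y),Z)+\overline{\rho}(h^{l}(X,Z),Y)$, and since $ltr(T\widetilde{N})$ is orthogonal to $S(T\widetilde{N})$ only the radical components of the second slots contribute; taking $Z=\xi\in\Gamma(Rad(T\widetilde{N}))$ and $Y=QY\in\Gamma(S(T\widetilde{N}))$ shows that metricity is equivalent to $\overline{\rho}(h^{l}(X,QY),\xi)=0$ for all screen sections $QY$, which by (\ref{8}) is equivalent to $A_{\xi}^{\ast}\equiv 0$, i.e.\ by (\ref{7'''}) to $\nabla_{X}\xi\in\Gamma(Rad(T\widetilde{N}))$ --- the parallelism of the radical distribution. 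That is the criterion the paper implicitly applies when it checks that $\nabla_{X}Y$ remains in $\Gamma(Rad(T\widetilde{N}))$. Your criterion, ``$h^{l}(X,Y)=0$ for all $Y\in\Gamma(Rad(T\widetilde{N}))$,'' is a strictly weaker and different condition: for instance when $Rad(T\widetilde{N})$ has rank one, (\ref{8'}) gives $\overline{\rho}(h^{l}(X,\xi),\xi)=0$, which already forces $h^{l}(X,\xi)=0$ identically, yet the induced connection of such a submanifold is certainly not always metric. Consequently the plan of ``isolating $h^{l}(X,Y)$ and showing it vanishes'' would establish the wrong equivalence; you need instead to control the $S(T\widetilde{N})$-valued (screen) part of $\nabla_{X}Y$, which is precisely what the tangential identity above does once the two hypotheses are imposed.
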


\begin{proof}
For almost complex structure $\overline{J},$ one can write from (\ref{11})%
\[
\overline{\bigtriangledown}_{X}Y=-\overline{J}(\overline{\bigtriangledown}%
_{X}\overline{J}Y),
\]
where $X,Y\in\Gamma(T\overline{N}).$ Then, using (\ref{6}) \ and (\ref{7'''}),
for $X\in\Gamma(T\widetilde{N})$ and $Y\in\Gamma(Rad(T\widetilde{N})),$ we
have
\begin{align}
\overline{\bigtriangledown}_{X}Y  &  =-\overline{J}(A_{\overline{J}Y}^{\ast
}X-\bigtriangledown_{X}^{\ast^{t}}\overline{J}Y)-Bh(X,\overline{J}%
Y)-Ch(X,\overline{J}Y)\nonumber\\
&  =TA_{\overline{J}Y}^{\ast}X+wA_{\overline{J}Y}^{\ast}X-T\bigtriangledown
_{X}^{\ast^{t}}\overline{J}Y-w\bigtriangledown_{X}^{\ast^{t}}\overline
{J}Y-Bh(X,\overline{J}Y)-Ch(X,\overline{J}Y). \label{g}%
\end{align}

Taking the tangential parts of (\ref{g}), we get
\begin{equation}
\bigtriangledown_{X}Y=TA_{\overline{J}Y}^{\ast}X-T\bigtriangledown_{X}%
^{\ast^{t}}\overline{J}Y-Bh(X,\overline{J}Y). \label{h}%
\end{equation}

According to (\ref{h}), if $T(A_{\overline{J}Y}^{\ast}X-\bigtriangledown
_{X}^{\ast^{t}}\overline{J}Y)\in\Gamma(Rad(T\widetilde{N}))$ and
$Bh(X,\overline{J}Y)=0$, then $\bigtriangledown_{X}Y\in\Gamma(Rad(T\widetilde
{N})).$ Consequently, the proof is completed.
\end{proof}

\begin{theorem}
Let $\widetilde{N}$ be a SASI-lightlike submanifold of an indefinite Kaehler
manifold $\overline{N}.$ Then distribution $D^{^{\prime}}$ is integrable if
and only if
\[
h(X,\overline{J}Y)=h(\overline{J}X,Y)
\]
holds for $\forall X,Y\in\Gamma(D^{^{\prime}})$.
\end{theorem}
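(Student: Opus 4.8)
The plan is to test integrability through the bracket criterion: $D^{^{\prime}}$ is integrable precisely when $[X,Y]\in\Gamma(D^{^{\prime}})$ for all $X,Y\in\Gamma(D^{^{\prime}})$. To detect membership in $D^{^{\prime}}$ I would use the transversal operator $w$ of (\ref{a!}). The key structural fact I would record first is that $D^{^{\prime}}$ is exactly the maximal $\overline{J}$-invariant subbundle of $T\widetilde{N}$: in the decomposition $T\widetilde{N}=D^{^{\prime}}\oplus_{\perp}D^{\ast}\oplus_{\perp}D^{^{\prime\prime}}$ the piece $D^{^{\prime}}$ is invariant, $D^{\ast}$ is anti-invariant, and $D^{^{\prime\prime}}$ is neither, so that for $Z\in\Gamma(T\widetilde{N})$ one has $wZ=0$ if and only if $Z\in\Gamma(D^{^{\prime}})$. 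Granting this, the whole statement reduces to computing the transversal part $w[X,Y]$ of the bracket.

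For that computation I would differentiate using the parallelism of $\overline{J}$. Since $\overline{J}$ is parallel, (\ref{11}) gives $\overline{\bigtriangledown}_{X}\overline{J}Y=\overline{J}(\overline{\bigtriangledown}_{X}Y)$ for $X,Y\in\Gamma(D^{^{\prime}})$. Because $D^{^{\prime}}$ is invariant, $\overline{J}Y$ is again tangent, so the Gauss formula (\ref{4}) applies to both sides; expanding the right-hand side with (\ref{a!}) and (\ref{c}) and comparing the transversal components yields
\[
h(X,\overline{J}Y)=w\bigtriangledown_{X}Y+Ch(X,Y).
\]
Interchanging $X$ and $Y$, subtracting, and using that $h$ (hence $Ch$) is symmetric while $\bigtriangledown$ is torsion-free, the symmetric terms cancel and I obtain
\[
h(X,\overline{J}Y)-h(\overline{J}X,Y)=w(\bigtriangledown_{X}Y-\bigtriangledown_{Y}X)=w[X,Y].
\]

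Combining the two ingredients finishes the proof: $h(X,\overline{J}Y)=h(\overline{J}X,Y)$ holds for all $X,Y\in\Gamma(D^{^{\prime}})$ if and only if $w[X,Y]=0$, which by the structural fact is equivalent to $[X,Y]\in\Gamma(D^{^{\prime}})$, i.e. to integrability of $D^{^{\prime}}$. I expect the main obstacle to be the structural equivalence $wZ=0\Leftrightarrow Z\in\Gamma(D^{^{\prime}})$ rather than the differentiation. Establishing it requires checking that $w$ is injective on $D^{\ast}\oplus_{\perp}D^{^{\prime\prime}}$: one has $w(D^{\ast})=\overline{J}D^{\ast}$ and $w(D^{^{\prime\prime}})\subseteq\mu$, and since $\overline{J}D^{\ast}$ and $\mu$ are complementary in $S(T\widetilde{N}^{\perp})$ a vanishing $w$-image forces the $D^{\ast}$- and $D^{^{\prime\prime}}$-components to vanish separately, the first because $\overline{J}$ is injective and the second because $D^{^{\prime\prime}}$ is neither invariant nor anti-invariant, so that each nonzero vector has nonzero transversal part by the constancy of dimensions in the SASI definition. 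This is the point where the precise SASI decomposition, as opposed to a generic almost-semi-invariant one, must be invoked.
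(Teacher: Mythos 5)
Your proof is correct and follows essentially the same route as the paper's: the paper obtains your key identity $h(X,\overline{J}Y)=w(\bigtriangledown_{X}Y)+Ch(X,Y)$ (its equation (\ref{i}), with $TY=\overline{J}Y$ since $wY=0$ on $D^{\prime}$) by specializing (\ref{e}) and (\ref{f}), and then antisymmetrizes using the symmetry of $h$ exactly as you do. The only difference is that you explicitly verify the step $w([X,Y])=0\Leftrightarrow[X,Y]\in\Gamma(D^{\prime})$ via injectivity of $w$ on $D^{\ast}\oplus_{\perp}D^{\prime\prime}$, a point the paper leaves implicit.
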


\begin{proof}
$D^{^{\prime}}$ is integrable if and only if\ for $\forall X,Y\in
\Gamma(D^{^{\prime}}),$ $[X,Y]\in\Gamma(D^{^{\prime}}).$ Here for $\forall
X,Y\in\Gamma(D^{^{\prime}}),$ obviously $wX=wY=0.$ From the equations
(\ref{e}) and (\ref{f}), we obtain
\[
-Ch^{l}(X,Y)+h^{l}(X,TY)+h^{s}(X,TY)-w(\bigtriangledown_{X}Y)-Ch^{s}(X,Y)=0.
\]

Thus, we get%
\begin{equation}
w(\bigtriangledown_{X}Y)=h(X,TY)-Ch(X,Y). \label{i}%
\end{equation}

Now, using (\ref{i}) and symmetry property of the second fundamental form $h,$
we have $w([X,Y])=h(X,TY)-h(TX,Y).$ Then the proof completes.
\end{proof}

\begin{theorem}
Let $\widetilde{N}$ be a SASI-lightlike submanifold of an indefinite Kaehler
manifold $\overline{N}.$ Then distribution $D^{^{^{\ast}}}$ is integrable if
and only if for $\forall V,W\in\Gamma(D^{^{^{\ast}}})$ \
\[
A_{\overline{J}V}W=A_{\overline{J}W}V.
\]

\end{theorem}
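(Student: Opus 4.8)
The plan is to prove integrability of $D^{\ast}$ by showing that for $V,W \in \Gamma(D^{\ast})$, the condition $A_{\overline{J}V}W = A_{\overline{J}W}V$ is equivalent to $[V,W] \in \Gamma(D^{\ast})$. The key structural fact is that $D^{\ast}$ is the anti-invariant distribution, so $\overline{J}V, \overline{J}W \in \Gamma(\overline{J}D^{\ast}) \subset \Gamma(S(T\widetilde{N}^{\perp}))$, meaning these are screen transversal vectors. This lets me invoke the Weingarten-type formula (\ref{6''}) with $Z = \overline{J}V$ and the shape operator $A_{\overline{J}V}$.

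First I would compute $\overline{\nabla}_V \overline{J}W$ using the Kaehler condition (\ref{11}), which gives $\overline{\nabla}_V \overline{J}W = \overline{J}(\overline{\nabla}_V W) = \overline{J}(\nabla_V W + h(V,W))$ by the Gauss formula (\ref{6}). On the other hand, since $\overline{J}W \in \Gamma(S(T\widetilde{N}^{\perp}))$, I apply (\ref{6''}) to expand $\overline{\nabla}_V \overline{J}W = -A_{\overline{J}W}V + D^{l}(V,\overline{J}W) + \nabla_V^{s}(\overline{J}W)$. Equating these two expressions and then writing the analogous equation with $V$ and $W$ interchanged, I would subtract the two. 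The symmetry of the second fundamental form, $h(V,W) = h(W,V)$, causes the $\overline{J}h(V,W)$ terms to cancel when I take an appropriate projection.

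The central step is to take the tangential component of the resulting difference. On the left-hand side the tangential parts collect into $\overline{J}(\nabla_V W - \nabla_W V) = \overline{J}[V,W]$ (the torsion-free property of $\nabla$ converts $\nabla_V W - \nabla_W V$ into the bracket), while on the right-hand side the only tangential contribution comes from the shape operators, yielding $-A_{\overline{J}W}V + A_{\overline{J}V}W$. Thus I obtain $\overline{J}[V,W] = A_{\overline{J}V}W - A_{\overline{J}W}V$ modulo transversal terms. Since $\overline{J}$ is invertible with $\overline{J}^{2} = -I$, the bracket $[V,W]$ lies in the appropriate invariant complement precisely when the shape operator difference vanishes; conversely, $A_{\overline{J}V}W = A_{\overline{J}W}V$ forces $\overline{J}[V,W]$ to have no component outside $\overline{J}D^{\ast}$, which is equivalent to $[V,W] \in \Gamma(D^{\ast})$.

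The main obstacle I anticipate is bookkeeping the various transversal pieces correctly, namely verifying that when $V,W \in \Gamma(D^{\ast})$ the image $\overline{J}[V,W]$ can only land in $\overline{J}D^{\ast}$ (equivalently that $[V,W]$ stays within $S(T\widetilde{N})$ and avoids $D$, $D^{\prime\prime}$, and $Rad(T\widetilde{N})$). This requires using the orthogonal decomposition from the Definition together with the metric-compatibility relations (\ref{10'}) to rule out spurious components; in particular one must check that the $D^{l}$ and $\nabla^{s}$ terms, which are purely transversal, do not interfere with the tangential projection that isolates the shape operators. Once the projection is handled cleanly, applying $\overline{J}$ once more and using $\overline{J}^{2}=-I$ completes the equivalence in both directions.
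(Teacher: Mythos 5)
Your argument is correct and is essentially the paper's own proof: the paper simply specializes its pre-derived identity (\ref{d}) (obtained by exactly the Kaehler-condition/Gauss--Weingarten computation you perform) to $V,W\in\Gamma(D^{\ast})$ with $TV=TW=0$, uses symmetry of $h$ to get $T([V,W])=A_{\overline{J}V}W-A_{\overline{J}W}V$, and concludes. The only slip is cosmetic: the tangential parts collect into $T[V,W]$ (the tangential component of $\overline{J}[V,W]$), not $\overline{J}[V,W]$ itself, but your ``modulo transversal terms'' qualifier and the final projection argument already account for this.
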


\begin{proof}
Since $TV=TW=0$ and from (\ref{d}), we obtain $-T\nabla_{V}W=A_{wW}V+Bh(V,W)$
for $\forall V,W\in\Gamma(D^{^{^{\ast}}})$. Then, we have $T([V,W])=A_{wW}%
V-A_{wV}W.$ Thus, the proof completes.
\end{proof}

\begin{theorem}
Let $\widetilde{N}$ be a SASI-lightlike submanifold of an indefinite Kaehler
manifold $\overline{N}.$ Then, the distribution $D^{^{^{\prime\prime}}}$ is
integrable if and only if the following properties hold for $\forall
Z,U\in\Gamma(D^{^{^{\prime\prime}}})$

\textbf{i)} $(h(Z,TU)-h(U,TZ)+\bigtriangledown_{U}^{s}wZ-\bigtriangledown
_{Z}^{s}wU)\in\Gamma(\mu)$ and $D^{l}(U,wZ)=D^{l}(Z,wU),$

\textbf{ii)} $(\nabla_{Z}TU-\nabla_{U}TZ-A_{wU}Z+A_{wZ}U)\in\Gamma
(D^{^{\prime\prime}}).$
\end{theorem}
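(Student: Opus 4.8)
The plan is to reduce integrability of $D^{\prime\prime}$ to the membership $[Z,U]\in\Gamma(D^{\prime\prime})$ for all $Z,U\in\Gamma(D^{\prime\prime})$, and then to detect that membership by applying $\overline{J}$ and reading off bundle components. First I would record that for $Z\in\Gamma(D^{\prime\prime})$ the $\overline{J}D^{\ast}$-component is absent, i.e. $w_{1}Z=0$, so that $wZ=w_{2}Z+w_{3}Z\in\Gamma(\mu)$ and $TZ\in\Gamma(D^{\prime\prime})$. With this in hand I would specialize the three master identities (\ref{d}), (\ref{e}) and (\ref{f}) to $X=Z$, $Y=U$, drop the $w_{1}$-terms, and subtract the same expressions with $Z$ and $U$ interchanged. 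Using that $h$ is symmetric, that $Ch^{l}$ and $Ch^{s}$ are symmetric in their arguments, and that $\nabla$ is torsion-free (so $\nabla_{Z}U-\nabla_{U}Z=[Z,U]$), this antisymmetrization yields from (\ref{d}) the tangential formula
\[
T[Z,U]=\nabla_{Z}TU-\nabla_{U}TZ-A_{wU}Z+A_{wZ}U,
\]
from (\ref{e}) the $ltr(T\widetilde{N})$-relation
\[
h^{l}(Z,TU)-h^{l}(U,TZ)+D^{l}(Z,wU)-D^{l}(U,wZ)=0,
\]
and from (\ref{f}) the $S(T\widetilde{N}^{\perp})$-formula
\[
w[Z,U]=h^{s}(Z,TU)-h^{s}(U,TZ)+\nabla^{s}_{Z}wU-\nabla^{s}_{U}wZ .
\]

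Next I would translate the condition $[Z,U]\in\Gamma(D^{\prime\prime})$ into requirements on $T[Z,U]$ and $w[Z,U]$. Writing $[Z,U]=P[Z,U]+R[Z,U]+Q[Z,U]$ as in (\ref{a}), I use that $\overline{J}$ restricts to an isomorphism of $D^{\prime}$, carries $D^{\ast}$ isomorphically into $\overline{J}D^{\ast}\subseteq S(T\widetilde{N}^{\perp})$, and maps $D^{\prime\prime}$ into $D^{\prime\prime}\oplus_{\perp}\mu$. Hence $T[Z,U]=\overline{J}(P[Z,U])+T(Q[Z,U])$ with $\overline{J}(P[Z,U])\in\Gamma(D^{\prime})$ and $T(Q[Z,U])\in\Gamma(D^{\prime\prime})$, so $T[Z,U]\in\Gamma(D^{\prime\prime})$ precisely when $P[Z,U]=0$; similarly $w[Z,U]=\overline{J}(R[Z,U])+w(Q[Z,U])$ with $\overline{J}(R[Z,U])\in\Gamma(\overline{J}D^{\ast})$ and $w(Q[Z,U])\in\Gamma(\mu)$, so $w[Z,U]\in\Gamma(\mu)$ precisely when $R[Z,U]=0$. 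Therefore $[Z,U]\in\Gamma(D^{\prime\prime})$ if and only if both $T[Z,U]\in\Gamma(D^{\prime\prime})$ and $w[Z,U]\in\Gamma(\mu)$.

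Finally I would match these two requirements with (ii) and (i). The requirement $T[Z,U]\in\Gamma(D^{\prime\prime})$, rewritten by the first displayed formula, is exactly clause (ii). For the transversal requirement $w[Z,U]\in\Gamma(\mu)$, I would combine the third formula with the $ltr$-relation from (\ref{e}): imposing $D^{l}(U,wZ)=D^{l}(Z,wU)$ forces, through that relation, the cancellation $h^{l}(Z,TU)=h^{l}(U,TZ)$, so that the combined expression $h(Z,TU)-h(U,TZ)+\nabla^{s}_{U}wZ-\nabla^{s}_{Z}wU$ loses its $ltr$-part and reduces (up to the sign conventions in the statement) to $w[Z,U]$ in $S(T\widetilde{N}^{\perp})$; requiring it to lie in $\mu$ is then equivalent to $w[Z,U]\in\Gamma(\mu)$. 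This produces exactly the two clauses of (i), completing the equivalence.

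The step I expect to be the main obstacle is the bookkeeping in the translation: one must keep the $ltr(T\widetilde{N})$-valued quantities ($h^{l}$, $D^{l}$) strictly apart from the $S(T\widetilde{N}^{\perp})$-valued ones ($h^{s}$, $\nabla^{s}$), and argue that detecting $[Z,U]\in\Gamma(D^{\prime\prime})$ requires $w[Z,U]$ to land in $\mu$ rather than merely in $S(T\widetilde{N}^{\perp})$ — this is the point at which the anti-invariant block $D^{\ast}$ is eliminated, and it rests on $\overline{J}$ acting injectively on $D^{\ast}$ together with $\overline{J}^{2}=-I$. A subtlety to flag is that, since $\overline{J}$ sends any tangent vector into $T\widetilde{N}\oplus S(T\widetilde{N}^{\perp})$ with no $ltr$-component, the relation coming from (\ref{e}) is in fact an identity; the role of the clause $D^{l}(U,wZ)=D^{l}(Z,wU)$ is then to split that identity and guarantee the $h^{l}$-terms cancel inside the expression appearing in (i).
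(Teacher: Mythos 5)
Your strategy is the same as the paper's: expand the Kaehler identity through (\ref{d})--(\ref{f}), antisymmetrize in $Z,U$ using the symmetry of $h$, and read off the $D^{\prime}$-, $D^{\ast}$-, $ltr$- and $\mu$-components; the paper compresses all of this into a single displayed line followed by ``using symmetry of $h$ we complete the proof,'' so your version is essentially that proof written out in full. Your reduction of $[Z,U]\in\Gamma(D^{\prime\prime})$ to the pair of conditions $T[Z,U]\in\Gamma(D^{\prime\prime})$ and $w[Z,U]\in\Gamma(\mu)$ is correct and is the right way to organize the computation, and the identification of clause (ii) with $T[Z,U]\in\Gamma(D^{\prime\prime})$ is clean.

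Two points need attention. First, the sign of the $\nabla^{s}$-terms: antisymmetrizing (\ref{f}) gives $w[Z,U]=h^{s}(Z,TU)-h^{s}(U,TZ)+\nabla^{s}_{Z}wU-\nabla^{s}_{U}wZ$, whereas the statement (and the paper's own proof display, which writes $\nabla^{s}_{U}wZ$ where the Weingarten formula (\ref{6''}) produces $\nabla^{s}_{Z}wU$) carries the opposite sign on these terms. ``Up to the sign conventions'' is not enough here: the expression in clause (i) as literally printed differs from the $h^{l}$-part plus $w[Z,U]$ by $2\left(\nabla^{s}_{U}wZ-\nabla^{s}_{Z}wU\right)$, so you should say explicitly that you are proving the sign-corrected statement. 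Second, and more substantively, your argument only establishes that (i) together with (ii) implies integrability. For the converse you need integrability to force $D^{l}(U,wZ)=D^{l}(Z,wU)$, equivalently (by the identity you derive from (\ref{e})) $h^{l}(Z,TU)=h^{l}(U,TZ)$; but integrability only controls $T[Z,U]$ and $w[Z,U]$, which involve the $h^{s}$- and $\nabla^{s}$-data and say nothing about the $h^{l}$-terms. Nothing in your argument (nor in the paper's one-line proof) closes this, so the necessity of the $D^{l}$-clause in (i) remains unproved: you should either supply an argument that $[Z,U]\in\Gamma(D^{\prime\prime})$ implies $h^{l}(Z,TU)=h^{l}(U,TZ)$, or note that clause (i) as stated is strictly stronger than what integrability requires.
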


\begin{proof}
Since $D^{^{\prime\prime}}$ is neither an invariant nor an anti-invariant
distribution on $\widetilde{N},$ we have $TZ\neq0\neq TU$ and $wZ\neq0\neq
wU.$ Using (\ref{d})-(\ref{e})-(\ref{f}), we obtaine
\begin{align*}
\nabla_{Z}TU+h(Z,TU)+\left\{  -A_{wU}Z+\bigtriangledown_{U}^{s}wZ+D^{l}%
(Z,wU)\right\}   &  =T\nabla_{Z}U+w(\nabla_{Z}U)\\
&  +Bh(Z,U)+Ch(Z,U).
\end{align*}

Then, using symmetry property of the second fundamental form $h,$ we complete
the proof.
\end{proof}

\bigskip Now, we can examine the parallelism of distributions $D^{^{\prime}},$
$D^{\ast}$ and $D^{^{\prime\prime}}$on $\widetilde{N}.$

\begin{theorem}
\label{B}Let $\widetilde{N}$ be a SASI-lightlike submanifold of an indefinite
Kaehler manifold $\overline{N}.$Then, the distribution $D^{^{\prime}}$ is
parallel if and only if the following properties hold for $\forall
X,Y\in\Gamma(D^{^{\prime}}),$ $\forall V\in\Gamma(D^{^{^{\ast}}})$ and
$\forall Z\in\Gamma(D^{^{^{\prime\prime}}})$

\textbf{i)} For $Y\in\Gamma(D),$ $A_{\overline{J}V}X$ and $\nabla_{X}%
TZ-A_{wZ}X$ have no component on $\Gamma(D).$

\textbf{ii)} For $Y\in\Gamma(Rad(T\widetilde{N})),$ $h^{l}(X,TZ)=-D^{l}(X,wZ)$
and $\overline{\rho}(D^{l}(X,wV),\overline{J}Y)=0.$
\end{theorem}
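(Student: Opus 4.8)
The plan is to read ``parallel'' as $\nabla_XY\in\Gamma(D^{\prime})$ for all $X,Y\in\Gamma(D^{\prime})$, and to detect membership in $D^{\prime}$ by orthogonality. Since $T\widetilde{N}=D^{\prime}\oplus_{\perp}D^{\ast}\oplus_{\perp}D^{\prime\prime}$ with $D^{\ast}$ and $D^{\prime\prime}$ nondegenerate, a tangent field lies in $D^{\prime}$ exactly when it is $\overline{\rho}$-orthogonal to every $V\in\Gamma(D^{\ast})$ and every $Z\in\Gamma(D^{\prime\prime})$. So I would fix $X\in\Gamma(D^{\prime})$ and reduce the statement to the two scalar tests $\overline{\rho}(\nabla_XY,V)=0$ and $\overline{\rho}(\nabla_XY,Z)=0$. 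Because $D^{\prime}=D\oplus_{\perp}Rad(T\widetilde{N})$ and both summands are $\overline{J}$-invariant, by linearity it suffices to handle $Y\in\Gamma(D)$ and $Y\in\Gamma(Rad(T\widetilde{N}))$ separately; these two subcases are precisely items \textbf{(i)} and \textbf{(ii)}.

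The computational engine is the Kaehler identity (\ref{11}) written as $\overline{\nabla}_XY=-\overline{J}(\overline{\nabla}_X\overline{J}Y)$, combined with $\overline{\rho}(\overline{J}\cdot,\cdot)=-\overline{\rho}(\cdot,\overline{J}\cdot)$ coming from (\ref{10'}), which transfers $\overline{J}$ off $\nabla_XY$ and onto the test vector. Writing $\overline{J}V=w_{1}V\in\Gamma(\overline{J}D^{\ast})\subset\Gamma(S(T\widetilde{N}^{\perp}))$ and $\overline{J}Z=TZ+wZ$, I would then invoke metric compatibility of $\overline{\nabla}$ to move the derivative onto the test vector and apply the Weingarten formula (\ref{6''}) to the transversal part and the Gauss formula (\ref{6}) to $TZ$. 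All the bookkeeping is controlled by the orthogonality pattern of (\ref{3}): $\overline{\rho}(S(T\widetilde{N}),ltr(T\widetilde{N}))=0$, every pairing against $S(T\widetilde{N}^{\perp})$ annihilates tangential and $ltr$ parts, while the only surviving degenerate coupling is the nondegenerate $Rad$--$ltr$ pairing $\overline{\rho}(\xi_{i},N_{j})=\delta_{ij}$.

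For $Y\in\Gamma(D)$ one has $\overline{J}Y\in\Gamma(D)$, and since $D\subset S(T\widetilde{N})$ is orthogonal to $ltr(T\widetilde{N})$, all $h^{l}$ and $D^{l}$ contributions drop and only shape-operator terms remain: the $V$-test collapses to $\overline{\rho}(\nabla_XY,V)=\rho(A_{\overline{J}V}X,\overline{J}Y)$ and the $Z$-test to $\overline{\rho}(\nabla_XY,Z)=-\rho(\nabla_XTZ-A_{wZ}X,\overline{J}Y)$, the boundary term in the latter vanishing because $\overline{\rho}(\overline{J}Y,\overline{J}Z)=\overline{\rho}(Y,Z)=0$. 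As $Y$ ranges over $D$, $\overline{J}Y$ ranges over the nondegenerate $D$, so these vanish for all test vectors iff $A_{\overline{J}V}X$ and $\nabla_XTZ-A_{wZ}X$ have no component in $\Gamma(D)$, which is \textbf{(i)}.

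The delicate case, and the main obstacle, is $Y\in\Gamma(Rad(T\widetilde{N}))$, where $\overline{J}Y\in\Gamma(Rad(T\widetilde{N}))$ is null and the roles reverse. Now $\overline{\rho}(\overline{J}Y,\cdot)$ annihilates the whole tangent bundle, so every shape-operator term $A_{wV}X,A_{wZ}X$ and every genuinely tangential term $\nabla_XTZ$ disappears, while the lightlike-transversal pieces survive against $Rad(T\widetilde{N})$. The $Z$-test then gives $\overline{\rho}(\nabla_XY,Z)=-\overline{\rho}(\overline{J}Y,h^{l}(X,TZ)+D^{l}(X,wZ))$ and the $V$-test gives $\overline{\rho}(\nabla_XY,V)=-\overline{\rho}(\overline{J}Y,D^{l}(X,w_{1}V))$. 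Because the $Rad$--$ltr$ pairing is nondegenerate, letting $\overline{J}Y$ run over $Rad(T\widetilde{N})$ forces $h^{l}(X,TZ)=-D^{l}(X,wZ)$ and $\overline{\rho}(D^{l}(X,wV),\overline{J}Y)=0$, which is \textbf{(ii)}. I expect the genuinely subtle point to be tracking exactly which inner products vanish—keeping straight that $\overline{\rho}(Rad(T\widetilde{N}),T\widetilde{N})=0$ yet $\overline{\rho}(Rad(T\widetilde{N}),ltr(T\widetilde{N}))\neq0$—since this orthogonality bookkeeping is precisely what separates \textbf{(i)} from \textbf{(ii)}; after that, the argument is routine substitution of (\ref{6})--(\ref{6''}).
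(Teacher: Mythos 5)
Your proposal is correct and follows essentially the same route as the paper: both detect parallelism of $D^{\prime}$ through the scalar tests $\overline{\rho}(\nabla_{X}Y,V)=0$ and $\overline{\rho}(\nabla_{X}Y,Z)=0$, convert these via the Kaehler identity, metric compatibility and the Gauss--Weingarten formulas (\ref{6})--(\ref{6''}) into exactly the paper's equations (\ref{k}) and (\ref{l}), and then split into the cases $Y\in\Gamma(D)$ and $Y\in\Gamma(Rad(T\widetilde{N}))$. Your orthogonality bookkeeping at the final step is in fact more explicit than the paper's, which simply asserts that (\ref{k}) and (\ref{l}) yield conditions (i) and (ii).
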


\begin{proof}
Assume that $D^{^{\prime}}$ is parallel distribution. Then for $\forall
X,Y\in\Gamma(D^{^{\prime}}),$ $\forall V\in\Gamma(D^{^{^{\ast}}})$ and
$\forall Z\in\Gamma(D^{^{^{\prime\prime}}}),$ we get
\[
\rho(\nabla_{X}Y,V)=0=\rho(\nabla_{X}Y,Z).
\]

Using $\overline{\nabla}$ is a metric connection, from (\ref{6}), (\ref{6''})
and (\ref{a!}), we have%
\begin{equation}
\rho(\nabla_{X}Y,V)=-\overline{\rho}(-A_{wV}X+D^{l}(X,wV)+\bigtriangledown
_{X}^{s}wV,\overline{J}Y). \label{k}%
\end{equation}

On the other hand, as a result of the necessary calculations, we get%
\begin{align}
\rho(\nabla_{X}Y,Z)  &  =-\overline{\rho}(\nabla_{X}TZ+h^{l}(X,TZ)+h^{s}%
(X,TZ)\nonumber\\
&  -A_{wZ}X+D^{l}(X,wZ)+\bigtriangledown_{X}^{s}wZ,\overline{J}Y). \label{l}%
\end{align}

Then, form (\ref{k}) and (\ref{l}), we prove that for $Y\in\Gamma(D)$ and
$Y\in\Gamma(Rad(T\widetilde{N}))$ the conditions \textit{(i) }and\textit{
(ii)} are true. The converse of the proof of is obvious.
\end{proof}

\begin{theorem}
\label{C}Let $\widetilde{N}$ be a SASI-lightlike submanifold of an indefinite
Kaehler manifold $\overline{N}.$ Then, the distribution $D^{^{\ast}}$ is
parallel if and only if

\textbf{i)} For $\forall V,W\in\Gamma(D^{^{^{\ast}}}),$ $A_{wV}W$ has no
components on $\Gamma(D)$ , $\Gamma(D^{^{\prime\prime}})$ and $\Gamma
(Rad(T\widetilde{N})),$

\textbf{ii)} For $Z\in\Gamma(D^{^{^{\prime\prime}}}),$ $\bigtriangledown
_{X}^{s}wV$ has no components on $\Gamma(\mu).$
\end{theorem}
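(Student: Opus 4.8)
The plan is to characterize parallelism of $D^{\ast}$ by the vanishing of $\nabla_V W$ off the distribution, i.e. by testing $\rho(\nabla_V W, \cdot)$ against each of the complementary distributions $D$, $D^{\prime\prime}$, $Rad(T\widetilde N)$, and the lightlike transversal $ltr(T\widetilde N)$. Recall that $D^{\ast}$ is parallel means $\nabla_V W \in \Gamma(D^{\ast})$ for all $V,W \in \Gamma(D^{\ast})$, and since $S(T\widetilde N) = D \oplus_{\perp} D^{\ast} \oplus_{\perp} D^{\prime\prime}$ together with $T\widetilde N = D^{\prime} \oplus_{\perp} D^{\ast} \oplus_{\perp} D^{\prime\prime}$, this is equivalent to $\nabla_V W$ having no component along $D$, $D^{\prime\prime}$, or $Rad(T\widetilde N)$.

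First I would compute each inner product using the Kaehler condition $\overline{\nabla}_V(\overline{J}W) = \overline{J}(\overline{\nabla}_V W)$, exactly as in the proofs of Theorem \ref{B} and the earlier integrability theorems. For $V,W \in \Gamma(D^{\ast})$ we have $TV = TW = 0$ so $\overline{J}W = wW \in \Gamma(\overline{J}D^{\ast}) \subset \Gamma(S(T\widetilde N^{\perp}))$. Then I would write $\overline{\nabla}_V W = -\overline{J}\,\overline{\nabla}_V(\overline{J}W) = -\overline{J}\,\overline{\nabla}_V(wW)$ and expand the right-hand side with the Weingarten-type formula (\ref{6''}):
\[
\overline{\nabla}_V(wW) = -A_{wW}V + D^{l}(V,wW) + \nabla_V^{s}(wW).
\]
Applying $\overline{J}$ and using the decompositions (\ref{a!}) and (\ref{c}) then expresses $\overline{\nabla}_V W$, hence $\nabla_V W = P(\overline{\nabla}_V W)$, in terms of $A_{wW}V$ and the projections of $\nabla_V^{s}(wW)$. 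The tangential piece that governs the $D$, $D^{\prime\prime}$, and $Rad(T\widetilde N)$ components traces back to $A_{wW}V$ (via $\overline{J}$ sending $A_{wW}V$ into $T\widetilde N$), which yields condition \textbf{i)}: $A_{wV}W$ has no components on $\Gamma(D)$, $\Gamma(D^{\prime\prime})$, or $\Gamma(Rad(T\widetilde N))$.

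Next I would isolate the transversal contribution. For $Z \in \Gamma(D^{\prime\prime})$, the part of $\nabla_V W$ that could leak into $D^{\prime\prime}$ is detected by pairing with $Z$; because $\overline{J}D^{\prime\prime}$ has components in $S(T\widetilde N^{\perp})$ (indeed $\mu$ arises precisely from the non-(anti)invariance of $D^{\prime\prime}$), the relevant term is $\overline{g}(\nabla_V^{s}(wW), \overline{J}Z)$, and $\overline{J}Z$ has a $\mu$-component. This is where condition \textbf{ii)} comes from: $\nabla_X^{s} wV$ must have no component on $\Gamma(\mu)$. I would make this precise by using $\overline{J}\mu \neq \mu$ together with the definitions of $D_0,D_1,D_2,D_3$ so that the $\mu$-part of $\nabla_V^{s}(wW)$ is exactly what pairs nontrivially with the tangential $D^{\prime\prime}$-directions.

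The main obstacle I expect is bookkeeping the projections under $\overline{J}$: unlike the fully invariant case, $\overline{J}$ does not preserve $D^{\prime\prime}$ or $\mu$, so I must carefully track how $\overline{J}$ shuffles components of $D_0,D_1$ into $D_1,D_2$ and $D_0,D_3$ (as recorded after (\ref{b})) and split $\nabla_V^{s}(wW)$ correctly between $\overline{J}D^{\ast}$ and $\mu$. Once the two pairings $\rho(\nabla_V W, D)$ and $\rho(\nabla_V W, D^{\prime\prime})$ are expressed in terms of $A_{wV}W$ and the $\mu$-part of $\nabla^{s}wV$, each vanishing is equivalent to the stated condition, and the converse follows by reading the same identities backward. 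Checking the $Rad(T\widetilde N)$-component, which uses $\overline{g}(\nabla_V W, N)$ for $N \in \Gamma(ltr(T\widetilde N))$ and the relation $\overline{J}N \in \Gamma(ltr(T\widetilde N))$, folds naturally into condition \textbf{i)} and completes the characterization.
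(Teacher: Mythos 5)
Your proposal is correct and follows essentially the same route as the paper: for $V,W\in\Gamma(D^{\ast})$ one uses the Kaehler identity to replace $\overline{\nabla}_{W}V$ by $\overline{J}\,\overline{\nabla}_{W}(wV)$, expands via (\ref{6''}), and pairs against $X\in\Gamma(D)$, $Z\in\Gamma(D^{\prime\prime})$ and $N\in\Gamma(ltr(T\widetilde{N}))$ to obtain exactly the paper's equations (\ref{m}), (\ref{n}) and (\ref{n'}), from which conditions (i) and (ii) are read off. The only cosmetic difference is that you apply $\overline{J}$ to the Weingarten expansion and then project, whereas the paper moves $\overline{J}$ across the metric onto the test vector; these are equivalent.
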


\begin{proof}
Assume that $D^{^{\ast}}$ is parallel distribution. Then for $\forall
X\in\Gamma(D),$ $\forall V,W\in\Gamma(D^{^{^{\ast}}})$ and $N\in
\Gamma(ltr(T\widetilde{N})),$ we get
\[
\rho(\nabla_{W}V,X)=\overline{\rho}(\nabla_{W}V,N)=\rho(\nabla_{W}V,Z)=0.
\]

Then using (\ref{10'}), (\ref{6}) and (\ref{6''}), we obtain
\begin{equation}
g(\nabla_{W}V,X)=\overline{g}(-A_{wV}W+D^{l}(W,wV)+\bigtriangledown_{W}%
^{s}wV,\overline{J}X). \label{m}%
\end{equation}

Also, it is easy to see that
\begin{equation}
\rho(\nabla_{W}V,N)=\overline{\rho}(-A_{wV}W+D^{l}(W,wV)+\bigtriangledown
_{W}^{s}wV,\overline{J}N) \label{n}%
\end{equation}
and
\begin{equation}
\rho(\nabla_{W}V,Z)=\overline{\rho}(-A_{wV}W+\bigtriangledown_{W}%
^{s}wV,\overline{J}Z). \label{n'}%
\end{equation}

Thus, taking into account the equations (\ref{m}) and (\ref{n}), the condition
(i) of the proof is complete. From (\ref{n'}), (ii) is obvious.
\end{proof}

\begin{theorem}
\label{D}Let $\widetilde{N}$ be a SASI-lightlike submanifold of an indefinite
Kaehler manifold $\overline{N}.$ Then, the distribution $D^{^{^{\prime\prime}%
}}$is parallel if and only if the following conditions are satisfied

\textbf{i)} $\nabla_{Z}TU-A_{wU}Z$ has no components on both $\Gamma(D)$ and
$\Gamma(Rad(T\widetilde{N})),$

\textbf{ii)} $h^{s}(Z,TU)=-\nabla_{Z}^{s}wU,$ where $U,Z\in\Gamma
(D^{^{^{\prime\prime}}}).$
\end{theorem}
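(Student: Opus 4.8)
The plan is to follow the same scheme as in the proofs of Theorems \ref{B} and \ref{C}. Since $\overline{\nabla}$ is a metric connection and $T\widetilde{N}=D\oplus_{\perp}Rad(T\widetilde{N})\oplus_{\perp}D^{\ast}\oplus_{\perp}D^{^{\prime\prime}}$, the distribution $D^{^{\prime\prime}}$ is parallel precisely when, for all $Z,U\in\Gamma(D^{^{\prime\prime}})$, the vector $\nabla_{Z}U$ has no component in $D$, in $Rad(T\widetilde{N})$ and in $D^{\ast}$. Equivalently, I would test $\nabla_{Z}U$ against $X\in\Gamma(D)$, against $N\in\Gamma(ltr(T\widetilde{N}))$ (which detects the $Rad(T\widetilde{N})$-component since $\overline{\rho}(\xi_{i},N_{j})=\delta_{ij}$), and against $V\in\Gamma(D^{\ast})$, and require each pairing to vanish.

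The main computational tool is the Kaehler identity (\ref{11}), which gives $\overline{\nabla}_{Z}U=-\overline{J}(\overline{\nabla}_{Z}\overline{J}U)$. Writing $\overline{J}U=TU+wU$ as in (\ref{a!}) (with $TU$ tangent and $wU\in\Gamma(\mu)$), and applying the Gauss formula (\ref{6}) to $\overline{\nabla}_{Z}TU$ and the Weingarten formula (\ref{6''}) to $\overline{\nabla}_{Z}wU$, I would obtain
\begin{align*}
\overline{\nabla}_{Z}\overline{J}U &=(\nabla_{Z}TU-A_{wU}Z)+(h^{l}(Z,TU)+D^{l}(Z,wU))\\
&\quad+(h^{s}(Z,TU)+\nabla_{Z}^{s}wU),
\end{align*}
split into its tangential, $ltr(T\widetilde{N})$ and $S(T\widetilde{N}^{\perp})$ parts.

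Next, using (\ref{10'}) in the equivalent form $\overline{\rho}(\overline{J}A,B)=-\overline{\rho}(A,\overline{J}B)$, I would move $\overline{J}$ onto the test vector in each pairing. For $X\in\Gamma(D)$ (so $\overline{J}X\in\Gamma(D)$) and for $N$ (so $\overline{J}N\in\Gamma(ltr(T\widetilde{N}))$, since $\overline{J}ltr(T\widetilde{N})=ltr(T\widetilde{N})$) only the tangential block of $\overline{\nabla}_{Z}\overline{J}U$ survives the pairing, giving $\rho(\nabla_{Z}U,X)=\rho(\nabla_{Z}TU-A_{wU}Z,\overline{J}X)$ and $\overline{\rho}(\nabla_{Z}U,N)=\overline{\rho}(\nabla_{Z}TU-A_{wU}Z,\overline{J}N)$; here I would invoke $\overline{\rho}(N_{i},N_{j})=0$ to discard $\overline{\rho}(h^{l}(Z,U),N)$. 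Demanding that both vanish for all $X$ and all $N$ is exactly condition \textbf{(i)}. For $V\in\Gamma(D^{\ast})$ (so $\overline{J}V=wV\in\Gamma(\overline{J}D^{\ast})\subset S(T\widetilde{N}^{\perp})$) only the screen-transversal block survives, giving $\rho(\nabla_{Z}U,V)=\overline{\rho}(h^{s}(Z,TU)+\nabla_{Z}^{s}wU,\overline{J}V)$, whose vanishing yields condition \textbf{(ii)}, namely $h^{s}(Z,TU)=-\nabla_{Z}^{s}wU$.

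The delicate step is this last one. Because $\mu$ is not invariant ($\overline{J}\mu\neq\mu$) and $wU\in\Gamma(\mu)$, I must keep track of the screen-transversal components carefully and verify that pairing against $\overline{J}V$ isolates precisely $h^{s}(Z,TU)+\nabla_{Z}^{s}wU$ on the $\overline{J}D^{\ast}$ summand; this is where the orthogonal splitting $S(T\widetilde{N}^{\perp})=\overline{J}D^{\ast}\oplus_{\perp}\mu$ and the isometry of $\overline{J}$ must be used in full, rather than the lightlike-pairing tricks that settled the $Rad(T\widetilde{N})$-component. For the converse I would run the three identities backwards: assuming \textbf{(i)} and \textbf{(ii)}, each of $\rho(\nabla_{Z}U,X)$, $\overline{\rho}(\nabla_{Z}U,N)$ and $\rho(\nabla_{Z}U,V)$ vanishes, whence $\nabla_{Z}U\in\Gamma(D^{^{\prime\prime}})$ and $D^{^{\prime\prime}}$ is parallel.
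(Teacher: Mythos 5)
Your proposal is correct and follows essentially the same route as the paper: the three pairings of $\nabla_{Z}U$ against $X\in\Gamma(D)$, $N\in\Gamma(ltr(T\widetilde{N}))$ and $V\in\Gamma(D^{\ast})$, computed via the Kaehler identity and the Gauss--Weingarten formulas, reproduce exactly the paper's equations (\ref{p}), (\ref{q}) and (\ref{o}). Your version merely spells out the splitting of $\overline{\nabla}_{Z}\overline{J}U$ and the converse direction in more detail than the paper does.
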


\begin{proof}
Assume that $D^{^{^{\prime\prime}}}$is parallel distribution. Then, for
$\forall X\in\Gamma(D),$ $\forall Z,U\in\Gamma(D^{^{^{\prime\prime}}})$ and
$N\in\Gamma(ltr(T\widetilde{N})),$ we get%
\[
\rho(\nabla_{Z}U,X)=\rho(\nabla_{Z}U,V)=\overline{\rho}(\nabla_{Z}U,N)=0.
\]

Then using (\ref{10'}), (\ref{6}) and (\ref{6''}), we obtain%
\begin{equation}
\rho(\nabla_{Z}U,X)=\overline{\rho}(\nabla_{Z}TU-A_{wU}Z,\overline{J}X).
\label{p}%
\end{equation}

Now, with easy calculations, we have
\begin{equation}
\rho(\nabla_{Z}U,N)=\overline{\rho}(\nabla_{Z}TU-A_{wU}Z,\overline{J}N).
\label{q}%
\end{equation}

Hence, the equations (\ref{p}) and (\ref{q}) prove \textit{(i)}.

On the other hand, for $\forall V\in\Gamma(D^{^{^{\ast}}}),$ we get%
\begin{equation}
\rho(\nabla_{Z}U,V)=\overline{\rho}(h^{s}(Z,TU)+\nabla_{Z}^{s}wU,\overline
{J}V) \label{o}%
\end{equation}
and this proves \textit{(ii)}.
\end{proof}

\begin{definition}
Let $\widetilde{N}$ be a SASI-lightlike submanifold of an indefinite Kaehler
manifold $\overline{N}.$ If its second fundamental form $h$ satisfies%
\begin{equation}
h(X,Y)=0,\text{ \ \ }\forall X,Y\in\Gamma(D), \label{o'}%
\end{equation}
then $\widetilde{N}$ is called a $D$- geodesic. Obviously, $\widetilde{N}$ is
a $D$-geodesic SASI-lightlike submanifold if
\begin{equation}
h^{l}(X,Y)=h^{s}(X,Y)=0,\text{ \ \ }\forall X,Y\in\Gamma(D).\text{\ }
\label{o''}%
\end{equation}

Also, if $h$ satisfies%
\begin{equation}
h(X,Y)=0,\text{ \ \ }\forall X\in\Gamma(D),\text{ \ }\forall Y\in
\Gamma(D^{^{\prime}})\text{ }(\text{or }\Gamma(D^{^{\prime\prime}})),
\label{o'''}%
\end{equation}
then $\widetilde{N}$ is called a mixed geodesic SASI-lightlike submanifold.
\end{definition}

\begin{theorem}
\label{E}Let $\widetilde{N}$ be a SASI-lightlike submanifold of an indefinite
Kaehler manifold $\overline{N}.$ Then, the distribution $D^{^{\prime}}$ of
$\widetilde{N}$ is a totally geodesic foliation in $\overline{N}$ if and only
if the following conditions are satisfied

\textbf{i)} $\widetilde{N}$ is $D^{^{\prime}}$-geodesic,

\textbf{ii)} $D^{^{\prime}}$ is parallel respect to $\nabla$ on $\widetilde
{N}$.
\end{theorem}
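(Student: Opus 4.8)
The plan is to reduce the foliation statement to a single pointwise containment and then split it with the Gauss formula. Recall that the distribution $D'$ defines a totally geodesic foliation in $\overline{N}$ precisely when $\overline{\nabla}_X Y \in \Gamma(D')$ for all $X,Y \in \Gamma(D')$; indeed, this one condition simultaneously forces integrability, since $[X,Y] = \overline{\nabla}_X Y - \overline{\nabla}_Y X$ and $\overline{\nabla}$ is torsion-free, and the vanishing of the second fundamental form of the leaves in $\overline{N}$. So I would first record this equivalent reformulation as the target and verify both implications of it, rather than taking only one half for granted.

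Next I would apply the Gauss formula (\ref{6}), writing $\overline{\nabla}_X Y = \nabla_X Y + h^l(X,Y) + h^s(X,Y)$ for $X,Y \in \Gamma(D')$. The point is that $\nabla_X Y \in \Gamma(T\widetilde{N})$ while $h^l(X,Y)+h^s(X,Y) \in \Gamma(tr(T\widetilde{N}))$, and $T\widetilde{N}$ and $tr(T\widetilde{N})$ are complementary by the decomposition (\ref{3}). Since $D' \subseteq T\widetilde{N}$, the containment $\overline{\nabla}_X Y \in \Gamma(D')$ holds if and only if both the transversal part vanishes, i.e. $h(X,Y)=0$, and the tangential part satisfies $\nabla_X Y \in \Gamma(D')$.

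Then I would identify these two conditions with (i) and (ii). The vanishing $h(X,Y)=0$ for all $X,Y \in \Gamma(D')$ is exactly the $D'$-geodesic condition in the sense of the preceding Definition (the analogue of (\ref{o'})-(\ref{o''})), equivalently $h^l(X,Y)=h^s(X,Y)=0$; this is condition (i). The condition $\nabla_X Y \in \Gamma(D')$ for all $X,Y \in \Gamma(D')$ is precisely the statement that $D'$ is parallel with respect to the induced connection $\nabla$, which is condition (ii). Reading the equivalence in both directions then closes the argument.

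The argument is essentially a direct-sum decomposition, so no step is genuinely hard; the only care needed is in the reformulation step, namely confirming that $\overline{\nabla}_X Y \in \Gamma(D')$ really is equivalent to ``$D'$ integrable with totally geodesic leaves in $\overline{N}$'' and not merely one implication of it, and in being explicit that the tangential/transversal splitting furnished by (\ref{3}) is exactly what permits reading off (i) and (ii) independently of one another.
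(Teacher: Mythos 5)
Your proof is correct and follows essentially the same route as the paper's: both reduce the claim to the single containment $\overline{\nabla}_{X}Y\in\Gamma(D^{^{\prime}})$ for $X,Y\in\Gamma(D^{^{\prime}})$ and then split it using the Gauss formula (\ref{6}). If anything, your version is slightly more careful than the paper's own argument, which first derives $h^{l}(X,Y)=h^{s}(X,Y)=0$ by pairing $\overline{\nabla}_{X}Y$ with $\xi$ and $E$ and then asserts that being $D^{^{\prime}}$-geodesic ``implies'' that $D^{^{\prime}}$ is parallel --- an implication that only holds in conjunction with the totally geodesic hypothesis --- whereas you correctly obtain (i) and (ii) as two independent components of the one containment via the direct-sum decomposition (\ref{3}).
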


\begin{proof}
If $D^{^{\prime}}$ is a totally geodesic foliation in $\overline{N},$ then for
$\forall X,Y\in\Gamma(D^{^{\prime}}),$ we get$\ \overline{\nabla}_{X}%
Y\in\Gamma(D^{^{\prime}}).$ For $\forall\xi\in\Gamma(Rad(T\widetilde{N})),$
$V\in\Gamma(D^{^{^{\ast}}}),$ $Z\in\Gamma(D^{^{^{\prime\prime}}})$ and
$E\in\Gamma(S(T\widetilde{N}^{\bot})),$ we have%
\[
\overline{\rho}(\overline{\nabla}_{X}Y,\xi)=\overline{\rho}(\overline{\nabla
}_{X}Y,E)=\overline{\rho}(\overline{\nabla}_{X}Y,V)=\overline{\rho}%
(\overline{\nabla}_{X}Y,Z)=0.
\]

Then we obtain the following results:%
\begin{align}
\overline{\rho}(\overline{\nabla}_{X}Y,\xi)  &  =\overline{\rho}%
(h^{l}(X,Y),\xi),\label{s}\\
\overline{\rho}(\overline{\nabla}_{X}Y,E)  &  =\overline{\rho}(h^{s}%
(X,Y),E),\label{t}\\
\overline{\rho}(\overline{\nabla}_{X}Y,V)  &  =0,\label{u}\\
\overline{\rho}(\overline{\nabla}_{X}Y,Z)  &  =0. \label{v}%
\end{align}

From (\ref{s}) and (\ref{t}), we can write that
\[
h^{l}(X,Y)=0\text{\ \ \ and \ }h^{s}(X,Y)=0,\text{\ }for\forall X,Y\in
\Gamma(D^{^{\prime}}).
\]

Thus, it is clear that \ $\widetilde{N}$ is $D^{^{\prime}}$-geodesic, which
implies that $D^{^{\prime}}$ is parallel respect to $\nabla$ on $\widetilde
{N}$. Then, the \textit{(i)} and \textit{(ii)} conditions are proved.

The converse of proof is clear.
\end{proof}

\begin{theorem}
\label{F}Let $\widetilde{N}$ be a SASI-lightlike submanifold of an indefinite
Kaehler manifold $\overline{N}.$ Then $\widetilde{N}$ is mixed geodesic with
respect to $D^{\ast}$ if and only if the following assertions hold:

\textbf{i)} $\overline{\rho}(D^{l}(X,wV),\xi)=0,$

\textbf{ii)} For $E\in\Gamma(\overline{J}D^{\ast}),$ $\overline{\rho}%
(\nabla_{X}^{s}wV,wE)=0$ and for $E\in\Gamma(wD^{^{\prime\prime}})$,
$\rho(A_{wV}X,TE)=\overline{\rho}(\nabla_{X}^{s}wV,wE),$\newline where
$\forall X\in\Gamma(D^{^{\prime}}),$ $V\in\Gamma(D^{^{^{\ast}}}),$ $\xi
\in\Gamma(Rad(T\widetilde{N})),$ $E\in\Gamma(S(T\widetilde{N}^{\bot})).$
\end{theorem}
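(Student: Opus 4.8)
The plan is to characterize the mixed-geodesic condition with respect to $D^\ast$ directly in terms of inner products. By definition, $\widetilde N$ is mixed geodesic with respect to $D^\ast$ when $h(X,V)=0$ for all $X\in\Gamma(D^{\prime})$ and $V\in\Gamma(D^\ast)$. Since $h(X,V)=h^l(X,V)+h^s(X,V)$ with $h^l$ valued in $ltr(T\widetilde N)$ and $h^s$ valued in $S(T\widetilde N^\perp)$, the vanishing of $h$ is equivalent to $h^l(X,V)=0$ and $h^s(X,V)=0$ separately. The strategy is then to detect each piece by pairing it against a spanning set of the relevant transversal bundle: $h^l(X,V)$ vanishes iff $\overline\rho(h^l(X,V),\xi)=0$ for all $\xi\in\Gamma(Rad(T\widetilde N))$ (because $\overline\rho$ pairs $ltr$ nondegenerately with $Rad$), and $h^s(X,V)$ vanishes iff $\overline\rho(h^s(X,V),E)=0$ for all $E\in\Gamma(S(T\widetilde N^\perp))$. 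Using the decomposition $S(T\widetilde N^\perp)=\overline J D^\ast\oplus_\perp\mu$ from the definition, it suffices to test $E$ in $\Gamma(\overline J D^\ast)$ and in $\Gamma(\mu)$; note $\mu=wD^{\prime\prime}$ accounts for the two cases named in part \textbf{ii)}.

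The main computation is to rewrite each of these inner products using the Kaehler condition. First I would write $h(X,V)=\overline\nabla_X V-\nabla_X V$ and, to bring in $\overline J$, use $\overline g(\overline J A,\overline J B)=\overline g(A,B)$ from (\ref{10'}) together with $\overline\nabla\overline J=0$ from (\ref{11}). Concretely, for $V\in\Gamma(D^\ast)$ we have $\overline J V=wV\in\Gamma(\overline J D^\ast)\subset\Gamma(S(T\widetilde N^\perp))$ and $TV=0$, so writing $\overline\nabla_X\overline J V=\overline J\,\overline\nabla_X V$ and applying the Weingarten formula (\ref{6''}) to the transversal field $wV$ gives
\[
\overline\nabla_X wV=-A_{wV}X+D^l(X,wV)+\nabla_X^s wV .
\]
Pairing $\overline\nabla_X V$ with $\xi$ and with $E$, and transferring $\overline J$ across $\overline\rho$ via (\ref{10'}), converts $\overline\rho(h(X,V),\xi)$ and $\overline\rho(h(X,V),E)$ into expressions in $A_{wV}X$, $D^l(X,wV)$ and $\nabla_X^s wV$ tested against $\overline J\xi$, $\overline J E$, etc. This is exactly the mechanism already used in Theorems \ref{C} and \ref{D}, so I would model the bookkeeping on equations (\ref{m})--(\ref{n'}).

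For part \textbf{i)}, pairing against $\xi\in\Gamma(Rad(T\widetilde N))$ isolates the $ltr$-component and, after moving $\overline J$ across, produces the term $\overline\rho(D^l(X,wV),\xi)$; since $\overline J\xi$ again lies in $Rad(T\widetilde N)$ (using invariance of $Rad$, condition \textbf{ii)} of the Definition) the residual terms involving $A_{wV}X$ and $\nabla_X^s wV$ pair trivially against $ltr$ and drop out, yielding the stated criterion $\overline\rho(D^l(X,wV),\xi)=0$. For part \textbf{ii)}, pairing against $E\in\Gamma(\overline J D^\ast)$ and against $E\in\Gamma(wD^{\prime\prime})=\Gamma(\mu)$ isolates the $S(T\widetilde N^\perp)$-component: when $E\in\Gamma(\overline J D^\ast)$ one has $\overline J E\in\Gamma(D^\ast)$ tangent with $T E=0$, so the $A_{wV}X$ term contributes nothing and only $\overline\rho(\nabla_X^s wV,wE)=0$ survives, whereas when $E\in\Gamma(wD^{\prime\prime})$ the tangential part $TE\neq0$ survives and one picks up the balance $\rho(A_{wV}X,TE)=\overline\rho(\nabla_X^s wV,wE)$. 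The hard part will be the careful allocation of $\overline J E$ into its tangential component $BE=TE$ and transversal component $CE=wE$ via (\ref{c}), and keeping track of which terms annihilate against which bundle; once those projections are pinned down, the equivalence follows by reading off (\ref{s})-type identities in both directions, the converse being immediate since each criterion was obtained from an ``if and only if'' pairing against a spanning set.
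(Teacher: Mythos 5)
Your overall strategy coincides with the paper's: reduce the mixed-geodesic condition to the vanishing of $\overline{\rho}(h^{l}(X,V),\xi)$ and $\overline{\rho}(h^{s}(X,V),E)$, replace $\overline{\nabla}_{X}V$ by $-\overline{J}(\overline{\nabla}_{X}\overline{J}V)$ via the Kaehler condition, apply the Weingarten formula (\ref{6''}) to $wV=\overline{J}V$ (using $TV=0$ for $V\in\Gamma(D^{\ast})$), and transfer $\overline{J}$ across $\overline{\rho}$; the paper records the resulting master identity as (\ref{y}) and then specializes $E$. Your treatment of part \textbf{i)} is sound (modulo the slip of saying the residual terms pair trivially against $ltr(T\widetilde{N})$ when you mean against $\xi\in\Gamma(Rad(T\widetilde{N}))$).

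The one step that fails as written is the allocation of $\overline{J}E$ when $E\in\Gamma(\overline{J}D^{\ast})$. Writing $E=\overline{J}V'$ with $V'\in\Gamma(D^{\ast})$, one gets $\overline{J}E=-V'\in\Gamma(D^{\ast})\subset\Gamma(S(T\widetilde{N}))$, which is purely tangential; hence $TE=-V'\neq 0$ and $wE=0$ --- the opposite of your claim that ``$\overline{J}E\in\Gamma(D^{\ast})$ tangent with $TE=0$,'' which is self-contradictory (a tangential vector cannot have vanishing tangential part). Consequently, in the identity $\rho(A_{wV}X,TE)=\overline{\rho}(\nabla_{X}^{s}wV,wE)$ the right-hand side vanishes identically for such $E$, and the surviving, non-vacuous condition is $\rho(A_{wV}X,TE)=0$ (i.e. $A_{wV}X$ has no component along $D^{\ast}$), not $\overline{\rho}(\nabla_{X}^{s}wV,wE)=0$, which holds trivially. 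It is also not true that the $A_{wV}X$ term ``contributes nothing'' in this case --- it is precisely the term that pairs with $TE$. This wrinkle is in fact present in the theorem's own statement, and the paper's proof conceals it by asserting the specialization of (\ref{y}) without carrying it out; but your argument should not reproduce it via an incorrect projection. The fix is purely local: redo the $TE$/$wE$ bookkeeping in the two cases, after which the rest of your argument, including the converse by pairing against spanning sets, goes through.
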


\begin{proof}
Assume that $\widetilde{N}$ is mixed geodesic. From (\ref{o''}) for $\forall
X\in\Gamma(D^{^{\prime}}),$ $V\in\Gamma(D^{^{^{\ast}}}),$ $\xi\in
\Gamma(Rad(T\widetilde{N})),$ we get $\overline{\rho}(h^{l}(X,V),\xi)=0.$
Then, we have $\overline{\rho}(\overline{\nabla}_{X}V,\xi)=0.$ Here,
taking$\overline{\text{ }J}\xi=\xi,$ we get $\overline{\rho}(\overline{\nabla
}_{X}V,\overline{J}\xi)=0.$ Thus, since $\overline{J}\overline{\nabla}%
_{X}\overline{J}V=-\overline{\nabla}_{X}\overline{J}V$ and (\ref{a!}), we
obtain\newline$\overline{\rho}(D^{l}(X,wV),\xi)=0.$

On the other hand, since $\widetilde{N}$ is mixed geodesic, we get
$\overline{\rho}(h^{s}(X,V),E)=0,$ where $\forall X\in\Gamma(D^{^{\prime}}),$
$V\in\Gamma(D^{^{^{\ast}}}),$ $E\in\Gamma(S(T\widetilde{N}^{\bot})).$ Then, we
have $\overline{\rho}(\overline{\nabla}_{X}V,E)=0.$ Similarly, since
$\overline{J}\overline{\nabla}_{X}\overline{J}V=-\overline{\nabla}%
_{X}\overline{J}V$ and (\ref{a!}), we obtain%
\begin{equation}
\rho(A_{wV}X-\nabla_{X}TV,TE)=\overline{\rho}(\nabla_{X}^{s}wV,wE). \label{y}%
\end{equation}

So, for $E\in\Gamma(\overline{J}D^{\ast})$ and $E\in\Gamma(wD^{^{\prime\prime
}}),$ we have our assertion.
\end{proof}

\begin{theorem}
\label{G}Let $\widetilde{N}$ be a SASI-lightlike submanifold of an indefinite
Kaehler manifold $\overline{N}.$ Then $\widetilde{N}$ is mixed geodesic with
respect to $D^{^{\prime\prime}}$ if and only if the following assertions hold:

\textbf{i)} $\overline{\rho}(D^{l}(X,wZ),\xi)=0,$

\textbf{ii)} For $E\in\Gamma(\overline{J}D^{\ast}),$ $\overline{\rho}%
(\nabla_{X}TZ-A_{wZ}X,TE)=0$ and for $E\in\Gamma(wD^{^{\prime\prime}})$,
$\overline{\rho}(A_{wZ}X-\nabla_{X}TZ,TE)=\overline{\rho}(\nabla_{X}%
^{s}wZ,wE),$\newline where $\forall X\in\Gamma(D^{^{\prime}}),$ $Z\in
\Gamma(D^{^{\prime\prime}}),$ $\xi\in\Gamma(Rad(T\widetilde{N})),$ $E\in
\Gamma(S(T\widetilde{N}^{\bot})).$
\end{theorem}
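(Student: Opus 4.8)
The plan is to characterize mixed geodesy with respect to $D^{\prime\prime}$ directly from the definition in (\ref{o'''}), namely that $h(X,Z)=0$ for all $X\in\Gamma(D^{\prime})$ and $Z\in\Gamma(D^{\prime\prime})$. Since $h=h^{l}+h^{s}$, this splits into $h^{l}(X,Z)=0$ (the $ltr$-part) and $h^{s}(X,Z)=0$ (the $S(T\widetilde{N}^{\perp})$-part). The strategy is to test these two vanishing conditions by pairing $\overline{\nabla}_{X}Z$ against a radical field $\xi$ and against a screen transversal field $E$ respectively, then to convert each pairing using the Kaehler parallelism of $\overline{J}$ and the decomposition $\overline{J}Z=TZ+wZ$ from (\ref{a!}).

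First I would note that by (\ref{6}) we have $\overline{\rho}(h^{l}(X,Z),\xi)=\overline{\rho}(\overline{\nabla}_{X}Z,\xi)$, since the tangential and screen-transversal parts of $\overline{\nabla}_{X}Z$ are orthogonal to $\xi\in\Gamma(Rad(T\widetilde{N}))$. Because $\overline{J}$ is parallel (\ref{11}) and $\overline{\rho}$ is $\overline{J}$-invariant (\ref{10'}), I can write $\overline{\rho}(\overline{\nabla}_{X}Z,\xi)=\overline{\rho}(\overline{J}\,\overline{\nabla}_{X}Z,\overline{J}\xi)=\overline{\rho}(\overline{\nabla}_{X}\overline{J}Z,\overline{J}\xi)$. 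Substituting $\overline{J}Z=TZ+wZ$ and expanding $\overline{\nabla}_{X}(TZ+wZ)$ via the Gauss formula (\ref{6}) for the tangential piece $TZ$ and the Weingarten-type formulas (\ref{6'})--(\ref{6''}) for the transversal piece $wZ$, the only term surviving the inner product with $\overline{J}\xi\in\Gamma(Rad(T\widetilde{N}))$ (using condition ii of the definition) should be the $D^{l}(X,wZ)$ term. This yields assertion i), that $\overline{\rho}(D^{l}(X,wZ),\xi)=0$ is equivalent to $h^{l}(X,Z)=0$.

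Next, for the screen-transversal part I would pair against $E\in\Gamma(S(T\widetilde{N}^{\perp}))$: by (\ref{6}) we have $\overline{\rho}(h^{s}(X,Z),E)=\overline{\rho}(\overline{\nabla}_{X}Z,E)$, and the same $\overline{J}$-invariance trick gives $\overline{\rho}(\overline{\nabla}_{X}Z,E)=\overline{\rho}(\overline{\nabla}_{X}\overline{J}Z,\overline{J}E)$. Expanding $\overline{\nabla}_{X}(TZ+wZ)$ once more and collecting the tangential and transversal components, the surviving contributions are $\rho(\nabla_{X}TZ-A_{wZ}X,TE)$ from pairing tangential parts with $TE$, and $\overline{\rho}(\nabla_{X}^{s}wZ,wE)$ from pairing screen-transversal parts with $wE$, where $\overline{J}E=TE+wE$. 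Splitting $E$ according to whether it lies in $\Gamma(\overline{J}D^{\ast})$ (so $\overline{J}E$ is tangential and $wE=0$) or in $\Gamma(wD^{\prime\prime})=\Gamma(\mu)$ (so both $TE$ and $wE$ can be nonzero) produces exactly the two cases of assertion ii).

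The main obstacle I anticipate is bookkeeping the component placement in each decomposition rather than any deep idea: one must carefully track which summand of the splitting $T\overline{N}|_{\widetilde{N}}=(Rad\oplus ltr)\perp S(T\widetilde{N})\perp S(T\widetilde{N}^{\perp})$ each term of $\overline{\nabla}_{X}(TZ+wZ)$ lands in, and verify that the cross terms vanish under the inner product with $\overline{J}\xi$ or $\overline{J}E$. In particular the decomposition $\overline{J}E=TE+wE$ for screen-transversal $E$ and the identification $\overline{J}D^{\ast}\subset S(T\widetilde{N}^{\perp})$, $\overline{J}\mu\neq\mu$ from condition iii) of the definition must be used to justify that the $TE$ and $wE$ pieces genuinely separate the two subcases. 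Once these membership facts are in hand, each equivalence follows by reading off the nonvanishing terms, and the converse directions are immediate by reversing the identities, so I would close with the standard remark that the converse is obvious.
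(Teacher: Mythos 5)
Your overall strategy is exactly the paper's: the paper proves this theorem by saying ``replace $V$ by $Z$ in Theorem \ref{F} and its proof,'' and that proof is precisely the computation you describe --- pair $h^{l}(X,\cdot)$ with $\xi$ and $h^{s}(X,\cdot)$ with $E$, use $\overline{\rho}(\overline{\nabla}_{X}\,\cdot\,,\cdot)=\overline{\rho}(\overline{\nabla}_{X}\overline{J}\,\cdot\,,\overline{J}\,\cdot)$, substitute $\overline{J}Z=TZ+wZ$, expand by (\ref{6})--(\ref{6''}), and read off components.

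There is, however, a concrete gap at the step where you claim that ``the only term surviving the inner product with $\overline{J}\xi$ should be the $D^{l}(X,wZ)$ term.'' In Theorem \ref{F} that is true because $V\in\Gamma(D^{\ast})$ is anti-invariant, so $TV=0$ and $\overline{\nabla}_{X}\overline{J}V$ contributes only Weingarten-type terms. For $Z\in\Gamma(D^{\prime\prime})$ one has $TZ\neq0$, so $\overline{\nabla}_{X}TZ$ contributes $h^{l}(X,TZ)\in\Gamma(ltr(T\widetilde{N}))$, and $\overline{\rho}(h^{l}(X,TZ),\overline{J}\xi)$ does not vanish for positional reasons: $ltr(T\widetilde{N})$ pairs nondegenerately with $Rad(T\widetilde{N})$. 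Likewise $h^{s}(X,TZ)$ survives the pairing with $wE$ in part ii). To rescue the forward direction you must observe that $TZ$ again lies in $\Gamma(D^{\prime\prime})$ (since $\overline{J}D_{0}=D_{1}\oplus_{\perp}D_{2}$ and $\overline{J}D_{1}=D_{0}\oplus_{\perp}D_{3}$), so mixed geodesy applied to the pair $(X,TZ)$ kills these extra terms; that additional invocation is missing from your write-up. In the converse direction the surviving $h^{l}(X,TZ)$ and $h^{s}(X,TZ)$ terms leave you with a relation between $h(X,Z)$ and $h(X,TZ)$ rather than with the vanishing of $h(X,Z)$, so ``the converse is obvious by reversing the identities'' does not go through as stated. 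To be fair, the paper's own one-line proof inherits the same difficulty, since substituting $Z$ for $V$ destroys the simplification $TV=0$ on which the proof of Theorem \ref{F} rests; but since you state the erroneous intermediate claim explicitly, it must be flagged.
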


\begin{proof}
Getting $Z$ instead of $V$ in the previous theorem and its proof, one can
prove the theorem.
\end{proof}

\bigskip

\textbf{CONCLUDING REMARK.} \ One can recall that a lightlike hypersurface $M$
of a semi-Riemannian manifolds is totally geodesic if only if $Rad(TM)$ is a
Killing distribution on $M$ \cite{Bejancu}. In our study, Teorem \ref{E},
Teorem \ref{F} and Teorem \ref{G} of Section 3 are important in this sense.
Also, Teorem \ref{A}, Teorem \ref{B}, Teorem \ref{C} and Teorem \ref{D} are
about the screen shape operator or the second fundamental form which are
related to the notion of Killing horizon of general relativity \cite{Bayrm3}.
Indeed, "\textit{Solutions of the highly non-linear Einstein's field equations
require the assumption that they admit Killing or homothetic vector fields.
This is due to the fact that the Killing symmetries leave invariant the metric
connection, all the curvature quantities and the matter tensor of the Einstein
field equations of a spacetime. Most explicit solutions (see \cite{Bayrm} pp.
267) have been found by using one or more Killing vector fields. Related to
the theme of this section, here we show that the Killing symmetry has an
important role in the most active area of research on Killing horizons in
general relativity} \cite{Bayrm}." And it is known that the theory of
lightlike geometry has an important relation with general relativity.

\bigskip


\begin{thebibliography}{99}                                                                                               %


\bibitem {Atckn}Atceken, M. and K\i l\i\c{c}, E., Semi-Invariant Lightlike
Submanifolds of a Semi-Riemannian Product Manifold, Kodai Math. J., Vol. 30,
No. 3, pp. 361-378, 2007.

\bibitem {Oguz}Bahad\i r, O. Screen Semi-Invariant Half-Lightlike Submanifolds
of a Semi-Riemannian Product Manifold, Global Journal of Advanced Research on
Classical and Modern Geometries ISSN: 2284-5569, Vol.4, Issue 2, pp.116-124, 2015.

\bibitem {Barros}Barros, M. and Romero, A., Indefinite Kaehler manifolds,
Math. Ann., 261, 55--62, 1982.

\bibitem {Bejan}Bejan, C., Almost Semi-Invariant Submanifolds of Locally
Product Riemannian Manifolds, Bull. Math, de la Soc. Sci. Math, de la R.S. de
Roumanie Tome 32 (80), nr. 1, 1988.

\bibitem {Bejancu3}Bejancu, A. and Papaghiuc., Almost Semi-Invariant
Submanifolds of a Sasakian Manifold, Bull. Math, de la Soc. Sci. Math, de la
R. S. de Roumanie Tome 28 (76), nr. 1, 1984.

\bibitem {Cabras}Cabras, A. and Matzeu, P., Almost Semi-Invariant Submanifolds
of a Cosymplectic Manifold, Demonstratio Mathematica, Vol. XIX, No 2, 1916.

\bibitem {Chen}Chen, B. Y., Geometry of Submanifolds, Marcel Dekker, New York, 1973.

\bibitem {Bejancu2}Duggal, K. L. and Bejancu, A., Lightlike Submanifolds of
Codimension Two, Math. J. Toyama Univ., 15, 59--82, 1992.

\bibitem {Bejancu}Duggal, K. L. and Bejancu, A., Lightlike Submanifolds of
Semi-Riemannian Manifolds and Applications, Kluwer Academic, 364, 1996.

\bibitem {Bayrm2}Duggal, K. and Sahin, B., Screen Cauchy Riemann Lightlike
Submanifolds, Acta Math. Hungar., 106(1-2), 137--165, 2005.

\bibitem {Bayrm3}Duggal, K. L. and Sahin, B., Generalized Cauchy Riemann
Lightlike Submanifolds, Acta Math. Hungar., 112(1-2), 113--136, 2006.

\bibitem {Bayrm4}Duggal, K. L. and Sahin, B., Lightlike Submanifolds of
Indefinite Sasakian Manifolds, Int. J. Math. Math. Sci., Art ID 57585, 1--21, 2007.

\bibitem {Bayrm5}Duggal, K. L. and Sahin, B., Contact Generalized CR-Lightlike
Submanifolds of Sasakian Submanifolds, Acta Math. Hungar., 122, No. 1-2,
45--58, 2009.

\bibitem {Bayrm}Duggal, K.L. and Sahin, B., Differential Geometry of Lightlike
Submanifolds, Birkhauser Veriag AG Basel-Boston-Berlin, 2010.

\bibitem {Garima}Gupta, G., Kumar, R. and Nagaich, R. K., Geometry of
Semi-Invariant Lightlike Product Manifolds, New York J. Math. 26, 1338-1354, 2020.

\bibitem {Erol}K\i l\i\c{c}, E., \c{S}ahin, B. and Kele\c{s}, S., Screen Semi
Invariant Lightlike Submanifolds of Semi-Riemannian Product Manifolds,
International Electronic Journal of Geometry Volume 4 No. 2 pp. 120-135, 2011.

\bibitem {K1}Kazan, S., Sahin, B., Pseudosymmetric Lightlike Hypersurfaces,
Turk J. Math, 38(2014), 1050-1070.

\bibitem {K2}Kazan, S., Sahin, B., Pseudosymmetric lightlike hypersurfaces in
Indefinite Sasakian Space Forms, Journal of Applied Analysis and Computation,
6(3)(2016), 699-719.

\bibitem {K3}Kazan, S., C-Bochner Pseudosymmetric Null Hypersurfaces in
Indefinite Kenmotsu Space Forms, Acta Universitatis Apulensis, 50(2017), 111-131.

\bibitem {Papaghiuc}Papaghiuc, N., Some Results on Almaost Semi- Invariant
Submanifolds in Sasakian Manifolds, Bull. Math, de la Soc. Sci. Math, de la H.
S. de Roumanie Tome, 28 (76) nr. 3, 1984.

\bibitem {Nergiz}Poyraz, N. and Do\u{g}an B., Semi-Invariant Lightlike
Submanifolds of Golden Semi-Riemannian Manifolds, J. Baun Inst. Sci. Technol.,
22(1), 106-121, 2020.

\bibitem {Hasan}Shahid, M. H., Almost Semi-Invariant Submanifolds of
Trans-Sasakian Manifolds, Bull. Math, de la Soc.Sci.Math de Roumanie Tome, 37
(85). nr. 3-1, 1993.

\bibitem {Sahin1}\c{S}ahin, B., Screen Conformal Submersions Between Lightlike
Manifolds and Semi-Riemannian Mmanifolds and Their Harmonicity, International
Journal of Geometric Methods in Modern Physics, vol.4, pp.987-1003, 2007.

\bibitem {Sahin2}\c{S}ahin, B., Lightlike Hypersurfaces of Semi-Euclidean
Spaces Satisfying Curvature Conditions of Semisymmetry Type, Turhish Journal
of Mathematics, vol.31, pp.139-162, 2007.

\bibitem {Sahin3}\c{S}ahin, B., Slant Lightlike Submanifolds of Indefinite
Hermitian Manifolds, Balkan Journal of Geometry and its Applications, vol.13,
pp.107-119, 2008.

\bibitem {Sahin4}\c{S}ahin, B., Screen transversal lightlike submanifolds of
indefinite Kaehler manifolds, Chaos Solitons\& Fractals, vol.38, pp.1439-1448, 2008.

\bibitem {Sahin5}\c{S}ahin, B., On a Submersion Between Reinhart Lightlike
Manifolds and Semi-Riemannian Manifolds, Mediterranean Journal of Mathematics,
vol.5, pp.273-284, 2008.

\bibitem {Tripathi}Tripathi, M. M. and Sing K. D., Almost Semi-Invariant
Submanifolds of an $\varepsilon-$Framed Metric Manifold, Demonstratio
Mathematica, Vol. XXIX, No 2, 1996.
\end{thebibliography}
\end{document}